\numberwithin{equation}{section}
\newcommand{\R}{\mathbb R}
\newcommand{\C}{\mathbb C}
\newcommand{\N}{\mathbb N}
\def\1#1{\overline{#1}}
\def\2#1{\widetilde{#1}}
\def\3#1{\widehat{#1}}
\def\4#1{\mathbb{#1}}
\def\5#1{\frak{#1}}
\def\6#1{{\mathcal{#1}}}
\newcommand{\mcite}[1]{\csname b@#1\endcsname}
\theoremstyle{theorem}
\newtheorem{theorem}{Theorem}[section]
\newtheorem{lemma}[theorem]{Lemma}
\newtheorem{proposition}[theorem]{Proposition}
\theoremstyle{definition}
\newtheorem{definition}[theorem]{Definition}
\theoremstyle{remark}
\numberwithin{equation}{section}
\begin{document}

\title{Frames of quasi-geodesics, visibility and   geodesic loops}
\author[Filippo Bracci]{Filippo Bracci}

\address{F. Bracci: Dipartimento Di Matematica\\
Universit\`{a} di Roma \textquotedblleft Tor Vergata\textquotedblright\ \\
Via Della Ricerca Scientifica 1, 00133 \\
Roma, Italy}
\email{fbracci@mat.uniroma2.it}

\thanks{Partially supported by PRIN 2017 Real and Complex Manifolds: Topology, Geometry and holomorphic
dynamics, Ref: 2017JZ2SW5, by GNSAGA of INdAM and by the MUR Excellence Department Project MatMod@TOV CUP:E83C23000330006 awarded
to the Department of Mathematics, University of Rome Tor Vergata}

\date{\today}
\keywords{Gromov hyperbolic spaces; Kobayashi hyperbolic spaces; visibility; extension of holomorphic maps}
\subjclass[2010]{32F45}

\begin{abstract}
In this paper we give a characterization in terms of ``quasi-geodesics frames' of visibility and existence of geodesic loops for bounded domains in $\C^d$ which are Kobayashi complete hyperbolic and Gromov hyperbolic. 
\end{abstract}

\dedicatory{Dedicated to Prof. Simeon Reich, on the occasion of his 75th birthday}

\maketitle
\tableofcontents
\section{Introduction}
The concept of visibility in several complex variables for bounded and unbounded domains in $\C^d$ has been recently introduced and turned out to be a key notion to study continuous extension of biholomorphisms, estimates for the Kobayashi distance and iteration theory (we refer the reader to, {\sl e.g.}, \cite{BNT, BG0, BGNT, BM, BZ, BZ2, CMS, Ma, NOT, NO}). 

In this note we consider $\Omega\subset \C^d$ a bounded domain (with no assumption on the regularity of $\partial\Omega$) and we assume that its Kobayashi distance $K_\Omega$ is complete (see, {\sl e.g.}, \cite{Kob} for definition and properties of the Kobayashi distance). 

The domain $\Omega$ is {\sl visible} if, roughly speaking, the geodesics which converge to different points in  the boundary bend inside the domain. 

Due to the completeness of $K_\Omega$ and by Arzel\`a-Ascoli's theorem, a non-compactly divergent sequence of geodesics rays or lines admits a subsequence converging {\sl uniformly on compacta} to a geodesics ray or line.

One of the basic observations of the present paper is that, if $\Omega$ is visibile then the convergence is actually {\sl uniform} (not just on compacta)---see Lemma~\ref{Lem:uniform-conv-visibile}. 

Moreover, visibile domains have a natural family of geodesics which exhibit certain peculiarities (they form a kind of ``frame''). Motivated by this, we give the following definition:

\begin{definition}\label{Def:frame}
Let $\Omega\subset\subset \C^d$ be a domain such that $(\Omega, K_\Omega)$ is complete hyperbolic and let $A\geq 1$, $B\geq 0$.  A family $\mathcal F$  is a {\sl frame of $(A,B)$-quasi-geodesics} if
\begin{enumerate}
\item for every $\gamma\in\mathcal F$ there exists $R_\gamma\in (0,+\infty]$ such that, if $R_\gamma<+\infty$, $\gamma:[0, R_\gamma]\to \Omega$  is a $(A,B)$-quasi-geodesic segment, while, if $R_\gamma=+\infty$, $\gamma:[0, +\infty)\to \Omega$  is a $(A,B)$-quasi-geodesic ray,
\item if $\gamma\in\mathcal F$ is a quasi-geodesic ray ({\sl i.e.}, $R_\gamma=+\infty$), then there exists $p_\gamma\in\partial \Omega$ such that $\lim_{t\to+\infty}\gamma(t)=p_\gamma$,
\item there exists a compact subset $K\subset\Omega$ such that $\gamma(0)\in K$ for every $\gamma\in \mathcal F$,
\item for every sequence $\{\gamma_k\}\subset \mathcal F$  there exist a subsequence $\{\gamma_{k_m}\}$ and  $\gamma\in\mathcal F$ such that $\{\gamma_{k_m}\}$ converges uniformly  to $\gamma$,
\item there exist $\epsilon>0$ and $\delta>0$ such that for every $z\in \Omega$ with $\hbox{dist}(z,\partial\Omega)<\epsilon$ there exists  $\gamma\in\mathcal F$ such that $K_\Omega(z, \gamma([0,+\infty])<\delta$.
\end{enumerate}
\end{definition}
It turns out that visibile domains have a frame of geodesics (see Proposition~\ref{Prop:vis-frame}). One of the main result of the paper is to prove the converse for Gromov hyperbolic domains:

\begin{theorem}\label{thm:char-vis-frame}
Let $\Omega\subset\subset \C^d$ be a domain such that $(\Omega, K_\Omega)$ is complete hyperbolic and Gromov hyperbolic. Assume that $\partial \Omega$ does not contain nontrivial analytic discs. Then $\Omega$ is visible if and only if there exists a frame of $(A,B)$-quasi-geodesics for some $A\geq 1, B\geq 0$.
\end{theorem}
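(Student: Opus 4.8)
The ``only if'' direction is essentially Proposition~\ref{Prop:vis-frame}, which the excerpt already grants: a visible complete hyperbolic domain carries a frame of geodesics, hence a frame of $(1,0)$-quasi-geodesics. So the real content is the converse: assuming a frame $\mathcal F$ of $(A,B)$-quasi-geodesics exists, and that $\partial\Omega$ contains no nontrivial analytic discs, and that $(\Omega,K_\Omega)$ is Gromov hyperbolic, deduce visibility. I would argue by contradiction: suppose visibility fails, so there are sequences $p_k\to p$, $q_k\to q$ with $p\neq q$ in $\partial\Omega$ and $(A',B')$-quasi-geodesics $\sigma_k$ joining $p_k$ to $q_k$ whose images eventually leave every compact subset of $\Omega$ (this is the standard negation of visibility). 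The goal is to manufacture from these a contradiction with the frame axioms, using Gromov hyperbolicity to control the geometry of quasi-geodesics.

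First I would record the Gromov-hyperbolic toolkit. Because $(\Omega,K_\Omega)$ is Gromov hyperbolic (and complete, proper, geodesic), the stability/shadowing lemma for quasi-geodesics applies: every $(A,B)$-quasi-geodesic stays within a bounded Hausdorff distance $\nu=\nu(A,B,\delta_{\mathrm{Gromov}})$ of an actual geodesic with the same endpoints (in $\overline\Omega^{\mathrm{Gromov}}$). This lets me freely pass between quasi-geodesics and geodesics up to a universal error. It also gives the Gromov boundary $\partial_G\Omega$ and, since $\partial\Omega$ has no nontrivial analytic discs, the known identification result (as in the references \cite{BGNT, BGZ}-type statements): the identity $\Omega\to\Omega$ extends to a homeomorphism $\overline\Omega^{\mathrm{Gromov}}\to\overline\Omega^{\mathrm{Eucl}}$, so ``two quasi-geodesic rays land at different Euclidean boundary points'' is equivalent to ``they define different points of the Gromov boundary'', equivalently their Gromov product based at a fixed point is bounded.

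Now the core argument. Fix a base point $o$ in the compact set $K$ from frame axiom (3). Take the sequence $\sigma_k$ from the failure of visibility; since $\sigma_k$ escapes to the boundary, points $z_k:=\sigma_k(t_k)$ on it with $\mathrm{dist}(z_k,\partial\Omega)\to 0$ can be chosen. By frame axiom (5) there is $\gamma_k\in\mathcal F$ with $K_\Omega(z_k,\gamma_k([0,\infty)))<\delta$; by axiom (4), after passing to a subsequence, $\gamma_k\to\gamma\in\mathcal F$ uniformly, and $\gamma$ must be a ray (its initial points lie in $K$ but the $z_k$, hence nearby points of $\gamma_k$, escape to $\partial\Omega$, so $R_{\gamma_k}\to\infty$ and the uniform limit is a ray with $R_\gamma=\infty$), landing at some $p_\gamma\in\partial\Omega$ by axiom (2). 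Uniform convergence then forces $z_k$ (being $\delta$-close to $\gamma_k$) to converge to $p_\gamma$. Thus the escaping parameter $t_k$ of $\sigma_k$ is pinned: $\sigma_k(t_k)\to p_\gamma$. The contradiction I aim for is that a quasi-geodesic joining $p_k\to p$ to $q_k\to q$ with $p\neq q$ cannot, for large $k$, come $\delta$-close to a \emph{single} frame ray direction and still have both endpoints escaping to two \emph{different} boundary points — quantitatively: in a Gromov hyperbolic space, a point on a $(A',B')$-quasi-geodesic from $p_k$ to $q_k$ that is within bounded $K_\Omega$-distance of $\gamma([0,\infty])$ and simultaneously within bounded distance of the two ``legs'' of $\sigma_k$ would force the Gromov product $(p|q)_o$ to be comparable to $K_\Omega(o,z_k)\to\infty$, i.e.\ $p=q$ in the Gromov boundary, hence in $\partial\Omega$ by the no-analytic-discs identification — contradicting $p\neq q$. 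Making this precise is the engine of the proof.

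The main obstacle, I expect, is the bookkeeping in that last step: one must show that the hypothesis ``$\sigma_k$ leaves every compact set'' actually produces points $z_k$ on $\sigma_k$ whose $K_\Omega$-distance to \emph{the fixed frame ray} $\gamma$ is bounded — not merely to the moving rays $\gamma_k$ — and then convert ``$z_k$ near $\gamma$ and near both legs of $\sigma_k$'' into the lower bound on $(p_k|q_k)_o$. This needs the uniform convergence $\gamma_k\to\gamma$ (axiom (4)) to upgrade ``$K_\Omega(z_k,\gamma_k)<\delta$'' to ``$K_\Omega(z_k,\gamma)<\delta+o(1)$'', together with a quantitative thin-triangles estimate: on a geodesic (or quasi-geodesic) triangle with vertices $o, p_k, q_k$, any point far from $o$ on the side $[p_k,q_k]$ lies within $O(\delta_{\mathrm{Gromov}})$ of one of the other two sides, and its distance to $o$ is then a lower bound for $(p_k|q_k)_o$ up to an additive constant. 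Passing $k\to\infty$ gives $(p|q)_o=+\infty$, i.e.\ $p=q$ in $\partial_G\Omega$, and the no-analytic-discs hypothesis closes the loop. One should also double-check the degenerate case where $R_\gamma<\infty$ cannot occur (handled above via axiom (3) and the escape of $z_k$), and that the quasi-geodesic constants $(A',B')$ of $\sigma_k$ from the definition of visibility can be taken uniform in $k$ — both are standard normalizations in the visibility literature cited.
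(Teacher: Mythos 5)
The ``only if'' half is fine (it is Proposition~\ref{Prop:vis-frame}), but your converse has a genuine gap at its engine: you invoke, as a known consequence of the no-analytic-discs hypothesis, that the identity extends to a homeomorphism from the Gromov compactification of $\Omega$ onto $\overline{\Omega}$, and you use it to convert ``$(p_k|q_k)_o\to+\infty$'' into ``$p=q$ in $\partial\Omega$''. No such identification is available here: that statement is precisely the definition of a Gromov model domain, which by \cite{BG0, BGNT} is equivalent to visibility \emph{plus} absence of geodesic loops --- i.e.\ it is (stronger than) the very conclusion of the theorem, and it does not follow from the absence of analytic discs in $\partial\Omega$. So the argument is circular. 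The only consequence of the no-discs hypothesis actually at your disposal is Lemma~\ref{Daddezio}, and it requires two sequences paired at \emph{uniformly bounded Kobayashi distance}; the divergence of the Gromov product $(p_k|q_k)_o$ is nothing but a restatement of your contradiction hypothesis that the geodesics $[p_k,q_k]_\Omega$ escape every compact set, and it produces no bounded-distance pairing between points tending to $p$ and points tending to $q$. Hence the final implication ``$p=q$ in the Gromov boundary, therefore $p=q$ in $\partial\Omega$'' cannot be justified with the stated hypotheses, and without it your scheme yields no contradiction.

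Relatedly, pinning a single escaping point $z_k\in\sigma_k$ to one frame ray $\gamma$ (so that $\sigma_k(t_k)\to p_\gamma$) is not enough information to conclude anything; the frame must be used along the \emph{whole} escaping geodesic. The paper's proof does exactly this: it attaches frame quasi-geodesics $\gamma_k^1,\gamma_k^2$ to points $w_k^1,w_k^2$ at distance $<\delta$ from the two endpoints (axiom (5)), uses axiom (4) to pass to uniform limits $\gamma^1,\gamma^2$ landing at $p_1\neq p_2$, forms the quasi-geodesic rectangle with fourth side $[\gamma_k^1(0),\gamma_k^2(0)]_\Omega$ (relatively compact by axiom (3)), and applies Lemma~\ref{Lem:thick-rect} to force every point of the escaping geodesic $[w_k^1,w_k^2]_\Omega$ to lie within a fixed $K_\Omega$-distance $N$ of $\gamma_k^1\cup\gamma_k^2$. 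Then Lemma~\ref{Daddezio} (now legitimately applicable, since the pairing is at distance $\leq N$) shows the cluster set of $[w_k^1,w_k^2]_\Omega$ is contained in $\{p_1,p_2\}$, while it must be connected and contain both $p_1$ and $p_2$ --- the contradiction. If you want to salvage your outline, you should replace the Gromov-boundary identification step by this rectangle-plus-D'Addezio-plus-connectedness argument (or an equivalent quantitative substitute); as written, the decisive step rests on an unproved statement equivalent to Theorem~\ref{frame-Gromovo}.
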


The hypothesis on the non-existence of nontrivial analytic discs on the boundary is technical. It is presently not known whether a visible complete hyperbolic and Gromov hyperbolic domain might have nontrivial analytic discs on the boundary (in \cite[Theorem~1.2]{BZ2} it is shown that for a visibile complete hyperbolic and Gromov hyperbolic domain with no geodesic loops--see below for the definition---and with $C^0$-boundary there can not be nontrivial analytic discs on the boundary).

The domain $\Omega$ is a {\sl Gromov model domain} provided the identity map extends as a homeomorphism from the Gromov compactification of $\Omega$ to the Euclidean closure $\overline{\Omega}$ (see \cite{BG0}). 

Due to the equivalence between Gromov's topology and Carath\'eodory's prime end topology (see, {\sl e.g.}, \cite[Ch. 4]{BCD}) in dimension one, Gromov model domains play essentially the r\^ole of simply connected Jordan domains, while, visible domains play the r\^ole of simply connected domains with locally connected boundary  with respect to the extension to the boundary of biholomorphisms (see \cite{BG0} for a careful explanation of this fact). 

The following are examples of Gromov model domains: $C^2$-bounded strongly pseudoconvex domains \cite{BB}, Gromov hyperbolic (bounded or unbounded) convex domain \cite{BG1, BG2, BGZ}, smooth bounded D'Angelo finite type convex domains \cite{Z1, Z2}, smooth bounded D'Angelo finite type pseudoconvex domains in $\C^2$ \cite{Fia}, bounded Gromov hyperbolic Lipschitz $\C$-convex domains \cite{BGNT}.

It turns out (see \cite{BG0, BGNT}) that a bounded Kobayashi complete hyperbolic and Gromov hyperbolic domain $\Omega$ is a Gromov model if and only if $\Omega$ is visible and has no {\sl geodesic loops}.  Where, a geodesic loop for $\overline{\Omega}$ is a geodesic line $\gamma:(-\infty,+\infty)\to \Omega$ such that the cluster set of $\gamma$ at $+\infty$ coincides with the cluster set of $\gamma$ at $-\infty$.

In this direction, we first show (see Proposition~\ref{Prop:visibi-no-cont}) that if $\Omega$ is visible and there exist $p\in\partial\Omega$ and $U$ an open neighborhood of $p$ such that $U\cap \Omega$ has at least two connected components having $p$ on their boundary then there exists a geodesic loop for $\overline{\Omega}$.

The second main result of the paper is a characterization of (non)existence of geodesic loops in terms of frames of quasi-geodesics. 

\begin{definition}
Let $\Omega\subset\subset \C^d$ be a domain such that $(\Omega, K_\Omega)$ is complete hyperbolic and let $A\geq 1$, $B\geq 0$.  A frame of $(A,B)$-quasi-geodesics $\mathcal F$ is  {\sl  looping} if  there exist two  quasi-geodesic rays $\gamma, \eta\in\mathcal F$ such that $\lim_{t\to+\infty}\gamma(t)=\lim_{t\to+\infty}\eta(t)$ and which stay at infinite distance each other. We say that the quasi-geodesic frame $\mathcal F$ is {\sl non-looping} otherwise. 
\end{definition}

Then we prove:

\begin{theorem}\label{Thm:char-geo-loop}
Let $\Omega\subset\subset \C^d$ be a domain such that $(\Omega, K_\Omega)$ is complete hyperbolic and Gromov hyperbolic. Assume $\partial \Omega$ does not contain nontrivial analytic discs and that $\Omega$ is visible. Then $\Omega$ has no geodesic loops if and only if it has a non-looping frame of $(A,B)$-quasi-geodesics for some $A\geq 1$ and $B\geq 0$. 
\end{theorem}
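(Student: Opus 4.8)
The plan is to prove the two implications separately; the forward one is short, and essentially all the work is in the converse.

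\textbf{Necessity.} Suppose $\Omega$ has no geodesic loops. By Proposition~\ref{Prop:vis-frame} (visibility) there is a frame $\mathcal F$ of \emph{geodesics}, in particular a frame of $(1,0)$-quasi-geodesics; I claim it is non-looping. If it were looping there would be geodesic rays $\gamma,\eta\in\mathcal F$ with $\lim_{t\to+\infty}\gamma(t)=\lim_{t\to+\infty}\eta(t)=p\in\partial\Omega$ staying at infinite distance. In the proper geodesic Gromov hyperbolic space $(\Omega,K_\Omega)$, two geodesic rays at infinite distance converge to distinct points $\xi_\gamma\ne\xi_\eta$ of the Gromov boundary $\partial_G\Omega$, so there is a geodesic line $\ell$ joining $\xi_\eta$ to $\xi_\gamma$. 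Its two ends are asymptotic to $\eta$ and to $\gamma$ respectively, hence each end stays at bounded $K_\Omega$-distance from a ray converging (Euclidean) to $p$; since, by visibility, a geodesic ray at bounded $K_\Omega$-distance from such a ray cannot cluster at any boundary point other than $p$ (compare $[\ell(t_k),\eta(s_k)]$ through a fixed compact set against its bounded length near $\partial\Omega$), both ends of $\ell$ converge to $p$. Then $\ell$ is a geodesic loop, a contradiction; so $\mathcal F$ is non-looping.

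\textbf{Sufficiency.} Let $\mathcal F$ be a non-looping frame of $(A,B)$-quasi-geodesics, and suppose toward a contradiction that $\Omega$ has a geodesic loop $\sigma$. By visibility both ends of $\sigma$ converge, necessarily to a common point $p\in\partial\Omega$; set $\sigma^{\pm}(t)=\sigma(\pm t)$ for $t\ge0$, geodesic rays to $p$ with distinct Gromov endpoints $\xi^{+}\ne\xi^{-}$, hence at infinite distance. For $t$ large, property~(5) gives quasi-geodesic rays $\gamma_t,\eta_t\in\mathcal F$ with $K_\Omega(\sigma^{+}(t),\gamma_t)<\delta$ and $K_\Omega(\sigma^{-}(t),\eta_t)<\delta$ (property~(3) and the quasi-geodesic inequality make the relevant arc lengths diverge, so these are genuine rays), and by property~(4) a subsequence satisfies $\gamma_t\to\gamma\in\mathcal F$ and $\eta_t\to\eta\in\mathcal F$ \emph{uniformly} up to the boundary. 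I will then establish: (i) $\gamma$ and $\eta$ converge to $p$; (ii) $\xi_{\gamma_t}\to\xi^{+}$ and $\xi_{\eta_t}\to\xi^{-}$ in $\partial_G\Omega$; (iii) $\xi_\gamma=\xi^{+}$ and $\xi_\eta=\xi^{-}$. Granting these, $\gamma$ and $\eta$ are rays of $\mathcal F$ with the same limit $p$ and distinct Gromov endpoints, so they stay at infinite distance; thus $\mathcal F$ is looping, contradicting the hypothesis, and $\Omega$ has no geodesic loop. For (i): the point $w_t\in\gamma_t$ with $K_\Omega(w_t,\sigma^{+}(t))<\delta$ has $\dist(w_t,\partial\Omega)\to0$ by the standard boundary lower bound for $K_\Omega$; if $w_t\not\to p$, then visibility applied to the geodesic segment $[w_t,\sigma^{+}(t)]$ — which has length $<\delta$, hence stays in a shrinking neighborhood of $\partial\Omega$ — is violated, so $w_t\to p$; since the parameter of $w_t$ on $\gamma_t$ diverges and $\gamma_t\to\gamma$ uniformly, property~(2) gives $\gamma(t)\to p$. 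For (ii): by the Morse lemma $\gamma_t$ lies within bounded Hausdorff distance of a geodesic ray $h_t$ with endpoint $\xi_{\gamma_t}$ and basepoint in the fixed compact set $K$; then $h_t$ meets a bounded neighborhood of $\sigma^{+}(t)$ at a parameter $t+O(1)$, and the standard Gromov-product estimate for geodesic rays gives $(\xi^{+}\cdot\xi_{\gamma_t})\ge t-O(1)\to+\infty$ (Gromov products based at any fixed point), i.e. $\xi_{\gamma_t}\to\xi^{+}$.

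Step (iii) is the crux and the main obstacle: one must show that uniform convergence $\gamma_t\to\gamma$ up to the boundary (Euclidean), for $(A,B)$-quasi-geodesics with basepoints in a fixed compact set, forces $\xi_{\gamma_t}\to\xi_\gamma$ in $\partial_G\Omega$; combined with (ii) this gives $\xi_\gamma=\xi^{+}$. The plan: take a subsequential Gromov limit $\zeta=\lim\xi_{\gamma_t}$, let $h'$ be a geodesic ray to $\zeta$, and note that the rays $h_t$ shadowing $\gamma_t$ subconverge to $h'$, so for each fixed large $s_0$ the curve $\gamma_t$ passes within a bounded $K_\Omega$-distance of points converging in $\Omega$ to $h'(s_0)$. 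Since $\gamma_t\to\gamma$ uniformly up to the boundary while $\gamma$ leaves every compact subset of $\Omega$, the parameters at which this happens remain bounded for fixed $s_0$; hence by the quasi-geodesic inequality $\gamma$ itself passes within a bounded $K_\Omega$-distance of $h'(s_0)$, and doing this for all large $s_0$ shows $\gamma$ shadows $h'$, whence $\xi_\gamma=\zeta$. Thus every subsequential limit of $(\xi_{\gamma_t})$ equals $\xi_\gamma$, giving (iii) (and symmetrically for $\eta$). The delicate point throughout (iii) is the mismatch between the Euclidean topology in which property~(4) supplies convergence and the Kobayashi/Gromov geometry controlling the endpoints at infinity, and this is precisely where visibility is used; the hypothesis that $\partial\Omega$ carries no nontrivial analytic disc enters only through the properties of visible domains invoked above, exactly as in Theorem~\ref{thm:char-vis-frame}.
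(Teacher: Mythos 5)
Your overall strategy is genuinely different from the paper's. The paper never touches the Gromov boundary: for necessity it invokes Proposition~\ref{Prop:geo-loop-inf}, whose nontrivial direction builds a geodesic loop out of two rays at infinite distance by an explicit thin-rectangle plus Arzel\`a--Ascoli construction; for sufficiency it uses the non-looping hypothesis only in the form ``the limit rays $\gamma^{\pm}_\infty$ stay at \emph{finite} distance'' and, with two applications of Lemma~\ref{Lem:thick-rect} and the triangle inequality, derives a uniform bound $K_\Omega(\gamma^+(T),\gamma^-(T))\le C$, contradicting Lemma~\ref{Lem:infinite-dist-ray}. You instead run everything through the visual boundary of the proper space $(\Omega,K_\Omega)$ (bi-infinite geodesics joining distinct boundary points, the Morse lemma, convergence of endpoints), aiming to show that the limit elements $\gamma,\eta\in\mathcal F$ have \emph{distinct} Gromov endpoints, hence stay at infinite distance, so that $\mathcal F$ itself is looping. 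This is a legitimate alternative: your necessity argument is a boundary-theoretic substitute for Proposition~\ref{Prop:geo-loop-inf}, your step (i) coincides with the paper's identification of the landing point of the limit rays, and your sketch of (iii) (bounded parameters via uniform Euclidean convergence and the landing of $\gamma$, local comparability of $K_\Omega$ with the Euclidean metric on compacta, then shadowing of $h'$) can be made rigorous; the price is heavier, though standard, machinery where the paper's rectangle argument performs the same transfer between Euclidean uniform convergence and Kobayashi geometry more economically.

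There is, however, one concrete flaw. You treat the elements $\gamma_t,\eta_t\in\mathcal F$ supplied by property (5) of Definition~\ref{Def:frame} as quasi-geodesic \emph{rays} with Gromov endpoints $\xi_{\gamma_t}$, justified by the parenthetical ``property (3) and the quasi-geodesic inequality make the relevant arc lengths diverge, so these are genuine rays''. That inference is false: a frame may consist largely of quasi-geodesic \emph{segments}, and property (5) only asks that the image of some element pass within $\delta$ of $\sigma^{\pm}(t)$; nothing prevents that element from being a segment ending right there, and the divergence of lengths along the sequence says nothing about any single $\gamma_t$. Since steps (ii) and (iii) are phrased in terms of $\xi_{\gamma_t}$, they do not make sense as written. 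The repair stays inside your framework: work with the near point $w_t\in\gamma_t$ satisfying $K_\Omega(w_t,\sigma^+(t))<\delta$ (which tends to $p$ Euclideanly and to $\xi^+$ in the Gromov bordification), shadow the truncated quasi-geodesic $\gamma_t|_{[0,s_t]}$ by the geodesic segment $[\gamma_t(0),w_t]_\Omega$, and let these segments subconverge to a geodesic ray $h'$ with endpoint $\xi^+$; your step (iii) argument then identifies $\xi_\gamma$ with $\xi^+$ directly and step (ii) becomes superfluous. Note that the paper's own proof never needs the approximating elements to be rays: only the limits $\gamma^{\pm}_\infty$ are rays (forced by uniform convergence and property (2)), and that is the fact you should be leaning on.
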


As a consequence of Theorem~\ref{thm:char-vis-frame}, Theorem~\ref{Thm:char-geo-loop} and the previous discussion we have:

\begin{theorem}\label{frame-Gromovo}
Let $\Omega\subset\subset \C^d$ be a domain such that $(\Omega, K_\Omega)$ is complete hyperbolic and Gromov hyperbolic. Assume $\partial \Omega$ does not contain nontrivial analytic discs. Then $\Omega$ is a Gromov model domain if and only if it has a non-looping frame of $(A,B)$-quasi-geodesics for some $A\geq 1$ and $B\geq 0$.
\end{theorem}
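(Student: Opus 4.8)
The plan is to deduce Theorem~\ref{frame-Gromovo} by threading together the two main theorems of the paper and the known characterization of Gromov model domains. First I would recall the fact, due to \cite{BG0, BGNT}, that a bounded Kobayashi complete hyperbolic and Gromov hyperbolic domain $\Omega$ is a Gromov model domain if and only if $\Omega$ is visible and has no geodesic loops. Under this reduction, the statement to prove becomes the equivalence
\[
\bigl(\Omega \text{ visible and without geodesic loops}\bigr)\iff\bigl(\Omega \text{ admits a non-looping frame of }(A,B)\text{-quasi-geodesics for some }A\ge 1,\ B\ge 0\bigr),
\]
and I would establish the two implications separately.

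For the forward implication, assume $\Omega$ is visible with no geodesic loops. Visibility gives, via Proposition~\ref{Prop:vis-frame} (equivalently, the ``only if'' direction of Theorem~\ref{thm:char-vis-frame}), the existence of a frame of quasi-geodesics for $\Omega$. Since $\partial\Omega$ contains no nontrivial analytic discs and $\Omega$ is visible, the hypotheses of Theorem~\ref{Thm:char-geo-loop} are met; as $\Omega$ has no geodesic loops, the ``only if'' direction of that theorem produces a non-looping frame of $(A,B)$-quasi-geodesics for suitable $A\ge 1$, $B\ge 0$, which is exactly the right-hand side of the equivalence.

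For the reverse implication, suppose $\Omega$ has a non-looping frame $\mathcal F$ of $(A,B)$-quasi-geodesics. In particular $\mathcal F$ is a frame of $(A,B)$-quasi-geodesics, so the ``if'' direction of Theorem~\ref{thm:char-vis-frame} (using that $\partial\Omega$ has no nontrivial analytic discs) yields that $\Omega$ is visible. Now $\Omega$ is visible, $\partial\Omega$ carries no nontrivial analytic disc, and $\Omega$ possesses a non-looping frame, so the ``if'' direction of Theorem~\ref{Thm:char-geo-loop} applies and gives that $\Omega$ has no geodesic loops. By the recalled result of \cite{BG0, BGNT}, $\Omega$ is a Gromov model domain, completing the proof.

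The only subtlety — and it is a mild one — is the order in which the cited results are invoked in the reverse direction: Theorem~\ref{Thm:char-geo-loop} takes visibility of $\Omega$ as a standing hypothesis, so one must first extract visibility from the frame through Theorem~\ref{thm:char-vis-frame} and only afterwards appeal to the characterization of geodesic loops. Since the assumption that $\partial\Omega$ contains no nontrivial analytic discs is common to all three inputs, no extra hypothesis is needed, and I do not expect any genuine obstacle beyond bookkeeping of the implications.
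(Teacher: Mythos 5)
Your proposal is correct and coincides with the paper's own argument: Theorem~\ref{frame-Gromovo} is stated there precisely as a consequence of the characterization of Gromov model domains as visible domains without geodesic loops (from \cite{BG0, BGNT}) combined with Theorem~\ref{thm:char-vis-frame} and Theorem~\ref{Thm:char-geo-loop}, applied in exactly the order you describe. Your remark about first extracting visibility before invoking Theorem~\ref{Thm:char-geo-loop} is the right bookkeeping and matches the intended reading.
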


The definition of frames of (non-looping) quasi-geodesics  might seem at a first sight rather technical and useless. However, unravelling the proofs in \cite{BG1, BG2, BGZ, BGZ1, Z1, Z2, Z3, Fia} where several kind of (Gromov hyperbolic) bounded domains are proved to be  Gromov models, one sees that actually the main work was exactly to construct non-looping quasi-geodesic frames  in the sense of our definition. As another example, we sketch an argument for constructing a frame of non-looping quasi geodesics in strongly pseudoconvex domains, taking for granted that a $C^2$ bounded strongly pseudoconvex domain $\Omega$ is Kobayashi hyperbolic and Gromov hyperbolic. Indeed, one can construct easily a frame of non-looping  quasi-geodesics as follows. If $p\in\partial\Omega$ and $U_p$ is an open neighborhood of $p$ such that $\Omega\cap U_p$ is biholomorphic to a strongly convex domain $V_p$, then one can consider the family of all real segments in $V_p$ steaming from a fixed point in $V_p$. Arguing as in \cite{BG1, BG2} one can see that this  family is a frame of non-looping  quasi-geodesics in $V_p$. Hence, its preimage $\mathcal F_p$ is  a frame of non-looping  quasi-geodesics  in $\Omega\cap U_p$. Taking $\mathcal F$ to be the union of all $\mathcal F_p$ and using the localization of the Kobayashi distance at the boundary, one can show that $\mathcal F$ is a frame of non-looping quasi-geodesics for $\Omega$ and hence, by Theorem~\ref{frame-Gromovo}, $\Omega$ is a Gromov model domain (a result well known, as remarked above, after \cite{BB}). 

The paper is organized as follows. In Section~\ref{S2} we recall some definitions and state some preliminary results we need in the proofs. In Section~\ref{S3} we prove Theorem~\ref{thm:char-vis-frame} and in Section~\ref{S4} we prove Theorem~\ref{Thm:char-geo-loop}.

\section{Notations and preliminaries}\label{S2}

In this section we let $\Omega \subset \mathbb C^d$ be a bounded domain.  We denote by $k_{\Omega}$ the infinitesimal Kobayashi pseudometric of $\Omega$ and by $K_{\Omega}$ the Kobayashi distance of $\Omega$. We refer the reader to \cite{Kob} for definitions and properties.

If $\Omega \subset \subset \mathbb C^d$ is complete hyperbolic, that is, $(\Omega,K_{\Omega})$ is a complete metric space, it follows from the Hopf-Rinow theorem that $(\Omega,K_{\Omega})$ is geodesic and thus, every couple of points in $\Omega$ can be joined by a geodesic ({\sl i.e.} length minimizing curve) for $K_\Omega$. If $p, q\in \Omega$, we denote by $[p,q]_\Omega$ any geodesic joining $p$ and $q$.

A geodesic $\gamma:[a,b]\to \Omega$, $-\infty<a<b<+\infty$ is called a {\sl geodesic segment}. A geodesic $\gamma:[a,+\infty)\to \Omega$, $a\in \R$, is a {\sl geodesic ray} and a geodesic $\gamma:(-\infty,+\infty)\to \Omega$ is a {\sl geodesic line}.

A geodesic triangle $T$ is the union of 3 geodesic segments (called {\sl sides}) $T=[x,y]_\Omega \cup [y,z]_\Omega \cup [z,x]_\Omega$ joining 3 points $x,\ y, z \in X$.

The complete  metric space $(\Omega,K_\Omega)$ is {\sl Gromov hyperbolic} if there exists $\delta > 0$ (the {\sl Gromov constant} of $\Omega$)
such that  every geodesic triangle $T$
is $\delta$-thin, that is,
every point on a side of $T$ has distance from the union of the other two sides less than or equal to $\delta$ (see, {\sl e.g.}, \cite{BH, Cor-Del-Pap} for details and further properties).

For an absolutely continuous curve $\gamma:[a,b] \rightarrow \Omega$, we denote by $l_{k_\Omega}(\gamma;[s,t])$  the length of the curve $\gamma$ on $[s,t]$, $a\leq s<t\leq b$, that is
\[
l_{k_\Omega}(\gamma;[s,t]):=\int_s^t k_\Omega(\gamma(\tau); \gamma'(\tau))d\tau.
\]
Let $A > 1$ and $B > 0$. An absolutely continuous curve $\gamma:[a,b] \rightarrow \Omega$ is  a {\sl $(A,B)$-quasi-geodesic} if for every $a \leq s < t \leq b$, we have :
$$
 l_{k_\Omega}(\gamma;[s,t]) \leq A K_\Omega(\gamma(s),\gamma(t)) + B.
$$
An $(A,B)$-quasi-geodesic ray (or line) is an absolutely continuous curve whose restriction to any compact interval in the domain of definition is an $(A,B)$-quasi-geodesic.
A {\sl quasi-geodesic} is just any $(A,B)$-quasi-geodesic segment/ray/line for some $A\geq 1$ and $B\geq 0$.

One of the main feature of Gromov hyperbolic spaces is the so-called Geodesic Stability Theorem, which says that every $(A,B)$-quasi-geodesic is shadowed by a geodesic at a distance which depends only on $A,B$ and the Gromov constant of the space. In this paper we do not need it directly, but we will use instead a straightforward consequence  for ``quasi-geodesic rectangles'' (see, {\sl e.g.}, \cite[Observation 4.4]{Z2}):

\begin{lemma}\label{Lem:thick-rect}
Let $\Omega\subset\subset\C^d$ be a Kobayashi complete hyperbolic domain such that $(\Omega, K_\Omega)$ is Gromov hyperbolic. Let $a, b, c, d\in \Omega$. Let $A\geq 1$ and $B\geq 0$. If $\Gamma_1$ is a $(A,B)$-quasi-geodesic joining $a$ with $b$, $\Gamma_2$ is a $(A,B)$-quasi-geodesic joining $b$ with $c$, $\Gamma_3$ is a $(A,B)$-quasi-geodesic joining $c$ with $d$, $\Gamma_4$ is a $(A,B)$-quasi-geodesic joining $a$ with $d$ (that is, $\Gamma_1\cup \Gamma_2\cup \Gamma_3\cup \Gamma_4$ is a {\sl $(A,B)$-quasi-geodesic rectangle} with sides $\Gamma_1, \Gamma_2, \Gamma_3$ and $\Gamma_4$) then there exists $N>0$ (which depends only on $A, B$ and the Gromov constant of $\Omega$) such that every point of one side is contained in the $N$-tubular neighborhood (with respect to $K_\Omega$) of the union of the other three.
\end{lemma}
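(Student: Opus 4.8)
The statement is a "quasi-geodesic rectangle" version of the thin triangle condition, and the natural route is to reduce it to the thin triangle property of $(\Omega, K_\Omega)$ by first replacing quasi-geodesics with genuine geodesics. The first step is to invoke the Geodesic Stability Theorem (Morse lemma): since $(\Omega, K_\Omega)$ is geodesic (by Hopf--Rinow, as recalled in Section~\ref{S2}) and Gromov hyperbolic with constant $\delta$, there is a constant $M = M(A,B,\delta)>0$ such that each $(A,B)$-quasi-geodesic $\Gamma_i$ lies within Hausdorff distance $M$ of a geodesic $\sigma_i$ joining the same pair of endpoints. Thus it suffices to prove the corresponding statement for a \emph{geodesic} rectangle $\sigma_1\cup\sigma_2\cup\sigma_3\cup\sigma_4$ with vertices $a,b,c,d$, at the cost of enlarging the final constant by $2M$.

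For the geodesic case, the plan is to cut the rectangle into two geodesic triangles by inserting a diagonal: let $\tau = [a,c]_\Omega$ be a geodesic from $a$ to $c$. Then $\sigma_1\cup\sigma_2\cup\tau$ is a geodesic triangle and $\sigma_3\cup\sigma_4\cup\tau$ is a geodesic triangle. By $\delta$-thinness, every point of $\sigma_1$ lies within $\delta$ of $\sigma_2\cup\tau$, and every point of $\tau$ lies within $\delta$ of $\sigma_3\cup\sigma_4$; chaining these, every point of $\sigma_1$ lies within $2\delta$ of $\sigma_2\cup\sigma_3\cup\sigma_4$, and similarly (using the other triangle, or symmetry of the roles) for each $\sigma_i$. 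Hence every point of one side of the geodesic rectangle is within $2\delta$ of the union of the other three. Combining with the reduction above, $N := 2M + 2\delta$ works, and it depends only on $A$, $B$ and $\delta$, as required.

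The only mild subtlety — and the place where one must be slightly careful rather than where the real difficulty lies — is the bookkeeping in passing back and forth between each $\Gamma_i$ and its geodesic shadow $\sigma_i$: a point on $\Gamma_1$ is within $M$ of a point on $\sigma_1$, which is within $2\delta$ of a point on some $\sigma_j$, $j\in\{2,3,4\}$, which in turn is within $M$ of a point on $\Gamma_j$; the endpoints match up by construction so no issue arises at the corners. Everything is a finite chain of triangle-inequality estimates, so there is no genuine obstacle. In fact, as the text notes, this is precisely the statement recorded in \cite[Observation 4.4]{Z2}, so one may equally just cite it; the argument above is the one-paragraph justification.
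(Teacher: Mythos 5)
Your argument is correct and is essentially the paper's own route: the text does not prove the lemma but cites it (\cite[Observation 4.4]{Z2}) as a ``straightforward consequence'' of the Geodesic Stability Theorem, and your reduction---shadow each $\Gamma_i$ by a geodesic with the same endpoints, cut the geodesic rectangle along the diagonal $[a,c]_\Omega$ into two $\delta$-thin triangles, then chain the estimates to get $N=2M+2\delta$---is exactly that argument. The only point worth a remark is that the paper's $(A,B)$-quasi-geodesics are defined by a length condition ($l_{k_\Omega}(\gamma;[s,t])\leq A K_\Omega(\gamma(s),\gamma(t))+B$) rather than as quasi-isometric embeddings, so to invoke geodesic stability one first reparametrizes by arclength, after which the standard Morse lemma applies and your constant is unaffected.
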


We now turn to the precise definition of visibility.

Let $p, q\in \partial \Omega$,  $p\neq q$. 
We say that the couple $(p,q)$ satisfies the {\sl visibility condition with respect to $K_\Omega$} 
if  there exist a neighborhood 
$V_p$ of $p$ and a neighborhood $V_q$ of $q$ and a compact subset $K$ of $\Omega$ such that $V_p \cap V_q = \emptyset$ and $[x,y]_{\Omega} \cap K \neq \emptyset$ for every $x \in V_p\cap \Omega$, $y \in V_q\cap \Omega$.

We say that $\Omega$ is {\sl visible}  if  every couple of points $p,q \in \partial \Omega$, $p \neq q$, 
satisfies the visibility condition with respect to $K_\Omega$.

We say that a geodesic line $\gamma:(-\infty,+\infty)\to \Omega$ is a {\sl geodesic loop in $\overline{D}$} if $\gamma$ has the same cluster set $\Gamma$ in $\overline{\Omega}$ at $+\infty$ and $-\infty$. In such a case we say that $\Gamma$ is the {\sl vertex} of the geodesic loop~$\gamma$.

By \cite[Lemma 3.1]{BNT}, we have:

\begin{lemma}\label{lem:vis-land}
Suppose $\Omega\subset\C^d$ is a bounded complete hyperbolic domain. If $\Omega$ is visible then every geodesic ray lands, {\sl i.e.}, if $\gamma:[0,+\infty)\to\Omega$ is a geodesic then there exists $p\in\partial\Omega$ such that $\lim_{t\to+\infty}\gamma(t)=p$. In particular, the vertex of every geodesic loop in $\overline{\Omega}$ is a point in $\partial\Omega$.
\end{lemma}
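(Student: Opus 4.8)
\emph{Proof proposal.} The plan is to show that the cluster set of $\gamma$ in $\overline{\Omega}$ is a single point lying on $\partial\Omega$, the only real work being to exclude two distinct cluster points by playing them against the visibility hypothesis. Write $\Gamma:=\bigcap_{T\ge 0}\overline{\gamma([T,+\infty))}$ for this cluster set. Since $\Omega$ is bounded, $\Gamma$ is a nonempty compact subset of $\overline{\Omega}$, so it suffices to prove (i) $\Gamma\subset\partial\Omega$ and (ii) $\Gamma$ is a singleton; then $\lim_{t\to+\infty}\gamma(t)$ exists and equals that point of $\partial\Omega$.

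For (i), normalize $\gamma$ so that $K_\Omega(\gamma(s),\gamma(t))=|s-t|$. By completeness of $K_\Omega$ and the Hopf--Rinow theorem, closed $K_\Omega$-balls are compact; hence any compact $L\subset\Omega$ is contained in a ball $\{z:K_\Omega(\gamma(0),z)\le r\}$, so $\gamma(t)\notin L$ for $t>r$. Thus $\gamma(t)$ eventually leaves every compact subset of $\Omega$, which immediately gives $\Gamma\cap\Omega=\emptyset$, i.e. $\Gamma\subset\partial\Omega$. For (ii), I would argue by contradiction: if $p\ne q$ both lie in $\Gamma$, pick sequences $s_n\nearrow+\infty$, $t_n\nearrow+\infty$ with $\gamma(s_n)\to p$, $\gamma(t_n)\to q$, and, passing to subsequences, arrange the interlacing $s_1<t_1<s_2<t_2<\cdots$. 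Let $V_p,V_q$ and the compact set $K\subset\Omega$ come from the visibility condition for $(p,q)$. For $n$ large, $\gamma(s_n)\in V_p\cap\Omega$ and $\gamma(t_n)\in V_q\cap\Omega$, and the subarc $\gamma|_{[s_n,t_n]}$ is length-minimizing between its endpoints, hence is an admissible choice of geodesic $[\gamma(s_n),\gamma(t_n)]_\Omega$. Visibility then forces some $\tau_n\in[s_n,t_n]$ with $\gamma(\tau_n)\in K$; but $\tau_n\ge s_n\to+\infty$, contradicting the conclusion of step (i) that $\gamma$ eventually leaves the compact set $K$. Hence $\Gamma=\{p\}$ with $p\in\partial\Omega$.

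Finally, for a geodesic loop $\gamma:(-\infty,+\infty)\to\Omega$, both $t\mapsto\gamma(t)$ and $t\mapsto\gamma(-t)$ (for $t\ge 0$) are geodesic rays, so by the above each lands at a single point of $\partial\Omega$; since $\gamma$ is a loop these two points coincide, so the vertex of $\gamma$ is a point of $\partial\Omega$. The one point needing care is that a subarc of a geodesic ray is itself a genuine geodesic segment between its endpoints, which is exactly what licenses feeding $\gamma|_{[s_n,t_n]}$ into the visibility hypothesis; combined with the interlacing trick this is the heart of the argument, and everything else is soft.
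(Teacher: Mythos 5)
Your argument is correct and self-contained: compact divergence of the ray (from $K_\Omega(\gamma(0),\gamma(t))=t$ and boundedness of $K_\Omega(\gamma(0),\cdot)$ on compacta) plus feeding the subarcs $\gamma|_{[s_n,t_n]}$ — which are legitimate geodesic segments — into the visibility condition for two putative distinct cluster points is exactly the standard way to prove this landing lemma. Note that the paper itself offers no proof here, simply quoting \cite[Lemma 3.1]{BNT}; your write-up is essentially the argument of that reference, so there is nothing to object to beyond the cosmetic point that step (i) needs only continuity (finiteness on compacta) of $K_\Omega$, not the Hopf--Rinow compactness of closed balls.
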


In the sequel, we will also need the following lemma (see \cite[Lemma A.2]{BG0})

\begin{lemma}[D'Addezio]\label{Daddezio}
Let $D\subset \C^d$ be a complete hyperbolic bounded domain. Assume that
$\partial D$ does not contain non-trivial analytic discs. If $\{z_n\}, \{w_n\} \subset D$ are two sequences such
that $\lim_{n\to+\infty} z_n = p, \lim_{n\to+\infty} w_n = q$ with $p,q\in\partial D$ and
\[
\sup_n K_D(z_n,w_n)<+\infty,
\]
then $p = q$.
\end{lemma}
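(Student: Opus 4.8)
The plan is to argue by contradiction: suppose $p \neq q$. The idea is to use the sequences $\{z_n\}$, $\{w_n\}$ together with the uniform bound $\sup_n K_D(z_n,w_n) =: M < +\infty$ to manufacture a nontrivial analytic disc sitting in $\partial D$, contradicting the hypothesis. First I would fix, for each $n$, a holomorphic map $\varphi_n : \mathbb{D} \to D$ realizing the Kobayashi distance up to a small error, i.e. with $\varphi_n(0) = z_n$ and $\varphi_n(t_n) = w_n$ for some $t_n \in (0,1)$ with $K_{\mathbb{D}}(0,t_n) \leq M+1$; this is possible since $K_D$ is the integrated form of $k_D$ and, $D$ being complete hyperbolic, can be realized (up to $\varepsilon$) by analytic chains, which after reparametrization can be taken to be a single disc with a controlled hyperbolic distance between the two marked points. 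The key point is that $t_n$ stays bounded away from $1$, uniformly in $n$.

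Next I would invoke normality. The maps $\varphi_n : \mathbb{D} \to D \subset \overline{D}$ take values in a fixed bounded (hence relatively compact in $\C^d$) set, so by Montel's theorem, after passing to a subsequence, $\varphi_n \to \varphi$ locally uniformly on $\mathbb{D}$, where $\varphi : \mathbb{D} \to \overline{D}$ is holomorphic. By the maximum principle applied to a defining-type argument (or, more robustly, since $D$ is bounded and $\varphi$ is an interior limit), either $\varphi(\mathbb{D}) \subset D$ or $\varphi(\mathbb{D}) \subset \partial D$. We have $\varphi(0) = \lim z_n = p \in \partial D$, so the second alternative holds: $\varphi(\mathbb{D}) \subset \partial D$. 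It remains to check $\varphi$ is nonconstant. Here is where the bound on $t_n$ enters: passing to a further subsequence, $t_n \to t_\infty \in (0,1)$, and then $\varphi(t_\infty) = \lim_n \varphi_n(t_n) = \lim_n w_n = q$. Since $p \neq q$, we get $\varphi(0) \neq \varphi(t_\infty)$, so $\varphi$ is a nonconstant holomorphic map $\mathbb{D} \to \partial D$, i.e. a nontrivial analytic disc in $\partial D$ — contradiction. Hence $p = q$.

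The main obstacle I anticipate is the dichotomy ``$\varphi(\mathbb{D}) \subset D$ or $\varphi(\mathbb{D}) \subset \partial D$'': one must rule out the limit disc being partly interior and partly boundary. The clean way is to note that if $\varphi(a) \in D$ for some $a \in \mathbb{D}$, then $\varphi$ maps a whole neighborhood of $a$ into $D$ (as $D$ is open), and then an open-closed argument on $\mathbb{D}$ — or, alternatively, applying the submean-value property of $-\log \mathrm{dist}(\cdot,\partial D)$ composed with $\varphi$ is delicate since that function need not be subharmonic for general $D$; instead I would use the Hurwitz-type observation that $\{\,a \in \mathbb{D} : \varphi(a) \in D\,\}$ is both open and, using that $D$ is relatively compact and the $\varphi_n$ are $D$-valued, characterized by a harmonicity/maximum-principle argument on any coordinate function, forcing it to be all of $\mathbb{D}$ or empty. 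Since $\varphi(0) = p \in \partial D$ it must be empty, which is exactly what we want. A secondary technical point is the realization of $K_D(z_n,w_n)$ by a single disc with controlled marked points rather than a chain; this is standard but should be stated carefully, perhaps by passing from a short analytic chain to one disc at the cost of an arbitrarily small additive error, which does not affect the boundedness of the hyperbolic distance between the marked parameters.
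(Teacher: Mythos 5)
Your overall strategy (normal families plus the no-disc hypothesis) is the right toolkit, and indeed the paper itself does not reprove this lemma but quotes it from \cite[Lemma A.2]{BG0}; however, your argument has a genuine gap at its very first step. For a general domain the Kobayashi distance is the infimum of total Poincar\'e lengths over \emph{chains} of analytic discs; the corresponding infimum over a \emph{single} disc is the Lempert function, which dominates $K_D$ and may be strictly larger (it need not even satisfy the triangle inequality). There is no way to ``reparametrize'' a chain into a single disc: if that were possible, the Lempert function would always coincide with $K_D$, which is Lempert's theorem and requires convexity-type hypotheses absent here. Consequently, $\sup_n K_D(z_n,w_n)\le M$ only provides, for each $n$, a chain of discs of total length $\le M+1$, with possibly unboundedly many links as $n\to\infty$; it does \emph{not} provide a disc $\varphi_n$ with $\varphi_n(0)=z_n$, $\varphi_n(t_n)=w_n$ and $t_n$ bounded away from $1$. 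Everything downstream (Montel, the nonconstant limit disc through $p$ and $q$) rests on that nonexistent single disc. A repair must genuinely handle chains, for instance: use your normal-families argument to prove the single-disc statement that for every $\varepsilon>0$ there is $\delta>0$ such that any $\varphi\in \Hol(\mathbb D, D)$ with ${\rm dist}(\varphi(0),\partial D)<\delta$ has image of Euclidean diameter $<\varepsilon$ on $r\overline{\mathbb D}$, $r:=\tanh(M+1)$; observe that by completeness of $K_D$ all nodes of the chains eventually lie within $\delta$ of $\partial D$ (otherwise $z_n$ would stay in a fixed compact subset of $D$); then a Cauchy estimate bounds the Euclidean length of each link by a constant (depending on $r$) times $\varepsilon$ times its Poincar\'e length, so the total Euclidean displacement along the chain is $O(\varepsilon(M+1))$, contradicting $|p-q|>0$. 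This extra summation step is exactly what your reduction to one disc skips.

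A secondary issue is the dichotomy ``$\varphi(\mathbb D)\subset D$ or $\varphi(\mathbb D)\subset\partial D$''. Your proposed justification (open--closed plus a maximum-principle argument on coordinate functions) does not establish it, and the dichotomy is in fact false for general bounded domains: limits of discs in $\mathbb D^2\setminus\{(0,0)\}$ can touch the boundary at a single interior parameter. What makes it true here is complete hyperbolicity, which you never invoke at this point: if $\varphi(a)\in D$ for some $a$, then $K_D(\varphi_n(a),\varphi_n(b))\le K_{\mathbb D}(a,b)$ and $\varphi_n(a)\to\varphi(a)\in D$, so the points $\varphi_n(b)$ remain in a closed Kobayashi ball, which is compact in $D$ by completeness, whence $\varphi(b)\in D$; since $\varphi(0)=p\in\partial D$ this forces $\varphi(\mathbb D)\subset\partial D$. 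This part is fixable as indicated (and the possibility $t_\infty=0$ is harmless, since then $\varphi_n(t_n)\to\varphi(0)=p\neq q$ is already a contradiction), but as written it constitutes a second gap.
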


\section{Visible domains and frame of quasi-geodesics}\label{S3}

As a matter of notation,  if $\{\gamma_k\}$ is a sequence of curves in $\C^d$ such that for every $k$ the curve $\gamma_k$ is defined on some interval $[0,R_k]$, with $R_k\in (0,+\infty]$,  we say that $\{\gamma_k\}$ converges {\sl uniformly} to a curve $\gamma:[0, a]\to \Omega$ provided that $a=\lim_{k\to+\infty}R_k$ and
for every $\theta>0$ there exists $k_0\in\N$ such that for every $k\geq k_0$ and for every $t\in [0,R_k]$ it holds
\[
|\gamma_k(t)-\gamma(t)|\leq \theta.
\]

\begin{lemma}\label{Lem:uniform-conv-visibile}
Let $\Omega\subset\subset \C^d$ be a domain such that $(\Omega, K_\Omega)$ is complete hyperbolic. Suppose $\Omega$ is visible. If $\{\gamma_k\}$ is a sequence of geodesic (segment or rays) in $\Omega$ which converges uniformly on compacta to a geodesic ray $\gamma$ then $\{\gamma_k\}$ converges uniformly to $\gamma$.
\end{lemma}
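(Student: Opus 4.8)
The plan is to argue by contradiction. Suppose $\{\gamma_k\}$ converges uniformly on compacta to a geodesic ray $\gamma:[0,+\infty)\to\Omega$ but not uniformly. Each $\gamma_k$ is defined on $[0,R_k]$ with $R_k\to+\infty$ (since otherwise uniform convergence on compacta to a ray would already fail), and negation of uniform convergence produces $\theta_0>0$, a subsequence (not relabeled), and parameters $t_k\in[0,R_k]$ with $|\gamma_k(t_k)-\gamma(t_k)|>\theta_0$. Since convergence is uniform on compacta, we must have $t_k\to+\infty$; moreover, passing to a further subsequence we may assume $R_k-t_k\to L\in[0,+\infty]$. The idea is that the ``bad'' parameters $t_k$ are pushing the curves $\gamma_k(t_k)$ out toward $\partial\Omega$, and we will use visibility to force a contradiction.

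First I would observe that $\gamma(t_k)\to p:=\lim_{t\to+\infty}\gamma(t)\in\partial\Omega$ — this limit exists because $\gamma$ is a geodesic ray in a visible domain, by Lemma~\ref{lem:vis-land}. On the other hand, up to a subsequence $\gamma_k(t_k)\to w$ for some $w\in\overline\Omega$, and $|w-p|\geq\theta_0>0$. I claim $w\in\partial\Omega$: if $w\in\Omega$, then the points $\gamma_k(t_k)$ stay in a compact subset of $\Omega$, hence $K_\Omega(\gamma_k(0),\gamma_k(t_k))$ is bounded; but $\gamma_k(0)\to\gamma(0)$ (again by uniform-on-compacta convergence, or by property that $\gamma_k(0)$ lies in a compact set), so $K_\Omega(\gamma(0),\gamma_k(t_k))$ is bounded, forcing $t_k=K_\Omega(\gamma_k(0),\gamma_k(t_k))$ bounded, contradicting $t_k\to+\infty$. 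So $w\in\partial\Omega$ and $w\neq p$.

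Next, consider the endpoints $\gamma_k(R_k)$. If $R_k-t_k$ stays bounded (case $L<+\infty$), then $K_\Omega(\gamma_k(t_k),\gamma_k(R_k))=R_k-t_k$ is bounded, so $\gamma_k(R_k)\to w$ as well (using Lemma~\ref{Daddezio} is not even needed here since distances are bounded and both cluster on the boundary — actually a direct compactness/boundedness argument suffices, but invoking Lemma~\ref{Daddezio} gives the cluster point is $w$). If instead $R_k-t_k\to+\infty$, pass to a subsequence so that $\gamma_k(R_k)\to q\in\overline\Omega$. Either way we obtain a limit endpoint; the key step is to look at the geodesic segment $\gamma_k|_{[0,R_k]}$, split it at $t_k$, and compare with $\gamma|_{[0,T]}$ for large fixed $T<t_k$. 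Since $\gamma_k(T)\to\gamma(T)\in\Omega$ (uniform on compacta) while $\gamma_k(t_k)\to w\in\partial\Omega$, the restriction $\gamma_k|_{[T,t_k]}$ is a geodesic segment whose endpoints converge to $\gamma(T)\in\Omega$ and $w\in\partial\Omega$; similarly comparing $\gamma(T)$-side with the ray $\gamma$ itself, whose forward endpoint is $p\neq w$. Now apply visibility: take $T$ large so that $\gamma(t)$ for $t\geq T$ lies in a small neighborhood $V_p$ of $p$, and take $k$ large so that $\gamma_k(t_k)\in V_w$, a neighborhood of $w$ disjoint from $V_p$. The geodesic $\gamma_k$ passes through $\gamma_k(T)$ (close to $\gamma(T)$, near $p$ hence in $V_p$ for $T$ large) and through $\gamma_k(t_k)\in V_w$; but a geodesic from a point of $V_p$ to a point of $V_w$ must meet the visibility compact $K$. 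Feeding this through carefully — using that $\gamma_k\to\gamma$ uniformly on $[0,T]$ and that $\gamma$ eventually leaves every compact subset of $\Omega$ — yields that $\gamma$ itself must intersect $\partial\Omega$ or that $\gamma_k$ behaves inconsistently with the uniform-on-compacta convergence, the desired contradiction.

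The main obstacle I anticipate is the bookkeeping in this last step: organizing the choice of $T$, the neighborhoods $V_p,V_w$, and the index $k$ so that the visibility compact set $K$ is simultaneously avoided by the relevant portion of $\gamma_k$ and hit by it. The cleanest route is probably to show directly that for $t$ in a range like $[T,t_k]$ the geodesic $\gamma_k(t)$ cannot reenter any fixed compact set once $T$ is large — i.e., that $\gamma_k|_{[T,R_k]}$ maps into the union $V_p\cup V_w$ for $k$ large by visibility applied to the pair $(p,w)$ — and this contradicts $\gamma_k(t_k)\to w\neq p$ when combined with $\gamma_k(T)\to\gamma(T)\to p$ and connectedness of the image. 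Once this is set up, the contradiction is immediate and shows no such $\theta_0$, $t_k$ can exist, hence convergence is uniform.
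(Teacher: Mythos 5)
Your strategy is the same as the paper's: argue by contradiction, extract parameters $t_k\to+\infty$ with $|\gamma_k(t_k)-\gamma(t_k)|\geq\theta_0$, show that (up to a subsequence) $\gamma_k(t_k)\to w\in\partial\Omega$ with $w\neq p:=\lim_{t\to+\infty}\gamma(t)$, and then apply the visibility condition to the pair $(p,w)$: for $T$ large and then $k$ large, $\gamma_k(T)\in V_p\cap\Omega$ and $\gamma_k(t_k)\in V_w\cap\Omega$, so the geodesic segment $\gamma_k|_{[T,t_k]}$ must meet the visibility compact $K$. Up to this point the argument is correct (your justification that $w\in\partial\Omega$ is even more explicit than the paper's; the case discussion on $\gamma_k(R_k)$ and $R_k-t_k$ is superfluous), and it coincides with the paper's proof.

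The problem is the way you propose to close. Your ``cleanest route'' asserts that visibility applied to $(p,w)$ forces $\gamma_k|_{[T,R_k]}$ to map into $V_p\cup V_w$, and you then invoke connectedness of the image. Visibility gives no such containment: it only guarantees that a geodesic joining a point of $V_p\cap\Omega$ to a point of $V_w\cap\Omega$ intersects $K$; the segment is free to pass near other parts of $\partial\Omega$ outside $V_p\cup V_w$, so the connectedness argument has no basis, and the alternative sentence (``$\gamma$ must intersect $\partial\Omega$ or $\gamma_k$ behaves inconsistently\dots'') is not a deduction. The correct finish is one line and uses exactly the other observation you made, that $\gamma_k(t)$ cannot lie in a fixed compact set for $t\geq T$ once $T$ is large: note that $K$ depends only on $V_p,V_w$ and \emph{not} on $T$; set $M:=\sup_j K_\Omega(\gamma_j(0),\gamma(0))+\max_{\xi\in K}K_\Omega(\gamma(0),\xi)<+\infty$ (finite since $\gamma_j(0)\to\gamma(0)$). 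The visibility conclusion produces $s_k\in(T,t_k)$ with $\gamma_k(s_k)\in K$, hence $s_k=K_\Omega(\gamma_k(0),\gamma_k(s_k))\leq M$; choosing $T>M$ from the start (compatible with $\gamma(T)\in V_p$, since $\gamma(t)\to p$) gives $M<T<s_k\leq M$, a contradiction. This is precisely how the paper concludes. (Also handle the convention for rays: if $t_k=+\infty$, replace $t_k$ by a large finite parameter at which $\gamma_k$ already lies in $V_w$, which exists because $\gamma_k$ lands by Lemma~\ref{lem:vis-land}.)
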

\begin{proof}
By hypothesis, for every $k$, either  $\gamma_k$ is a geodesic segment defined on $[0,R_k]$ for some $R_k>0$ or $\gamma_k$ is geodesic ray defined on $[0,+\infty)$. 

In case $\gamma_k$ is a geodesic ray, since $\Omega$ is visible, by Lemma~\ref{lem:vis-land}  there exists $\gamma_k(+\infty):=\lim_{t\to+\infty}\gamma_k$, hence, in this case, in order to unify notations, we can consider $\gamma_k$ to be defined on $[0,+\infty]$ and set $R_k=+\infty$. 

Now, assume by contradiction there exists $\theta>0$ and a sequence $\{t_k\}$ such that $t_k\in [0,R_k]$ and
\[
|\gamma_k(t_k)-\gamma(t_k)|\geq \theta,
\]
where, as before, if $t_k=+\infty$ we set $\gamma(+\infty):=\lim_{t\to+\infty}\gamma(t)$.

Since $\{\gamma_k\}$ converges uniformly on compacta to $\gamma$, it follows that either $t_k=+\infty$ or $\{t_k\}$ converges to $+\infty$. Hence, up to subsequences, we can assume that $\{\gamma_k(t_k)\}$ converges to a point $q\in\partial \Omega$ such that
\[
|q-\gamma(+\infty)|\geq \theta.
\]
Fix an open neighborhood $U$ of $\gamma(+\infty)$ and an open neighborhood $V$ of $q$ such that $\overline{U}\cap\overline{V}=\emptyset$. By visibility hypothesis, there exists a compact subset $K\subset\Omega$ such that every geodesic in $\Omega$ joining a point of $U\cap \Omega$ to a point in $V\cap \Omega$ has to intersect $K$.

Since $\gamma(t)$ tends to $\gamma(+\infty)$ as $t\to+\infty$, there exists $R>0$ such that $\gamma(r)\in U\cap\Omega$ for all $r\geq R$. Hence, for every $r\geq R$, there exists $k_r\in\N$ such that $\gamma_k(r)\in U\cap \Omega$ for $k\geq k_r$. Up to take $k_r$ larger, we can also assume that $\gamma_k(t_k)\in V\cap\overline{\Omega}$ for $k\geq k_r$.

Thus,  $\gamma_k([r,t_k))$ is a geodesic from $U\cap \Omega$ to $V\cap \Omega$ for $k\geq k_r$. Therefore, for every $k\geq k_r$ there exists $s_k\in (r,t_k)$ such that $\gamma_k(s_k)\in K$. 

But,
\[
+\infty>\max_{\xi\in K}K_\Omega(z_0, \xi))\geq K_\Omega(z_0, \gamma_k(s_k))=K_\Omega(\gamma_k(0), \gamma_k(s_k))=s_k>r,
\]
which gives a contradiction for $r$ large enough. 
\end{proof}

\begin{proposition}\label{Prop:vis-frame}
Let $\Omega\subset\subset \C^d$ be a domain such that $(\Omega, K_\Omega)$ is complete hyperbolic. If $\Omega$ is visible then there exists a frame of geodesics.
\end{proposition}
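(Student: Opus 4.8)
The plan is to build the frame $\mathcal F$ out of geodesic rays emanating from a single basepoint $z_0\in\Omega$. Fix $z_0\in\Omega$ and let
\[
\mathcal F:=\{\gamma:[0,+\infty)\to\Omega \ : \ \gamma \text{ is a geodesic ray with } \gamma(0)=z_0\}.
\]
I will verify the five conditions of Definition~\ref{Def:frame} with $A=1$, $B=0$, $K=\{z_0\}$. Condition (1) holds since each $\gamma\in\mathcal F$ is a genuine geodesic ray (so $R_\gamma=+\infty$ for all of them), and condition (2) is exactly Lemma~\ref{lem:vis-land}: visibility forces every geodesic ray to land at a point $p_\gamma\in\partial\Omega$. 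Condition (3) is immediate from the construction since $\gamma(0)=z_0$ for all $\gamma$.

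The two substantive points are (4) and (5). For the compactness condition (4), start with a sequence $\{\gamma_k\}\subset\mathcal F$. Since $(\Omega,K_\Omega)$ is complete, each $\gamma_k$ is $1$-Lipschitz from $[0,+\infty)$ with the Euclidean metric (the identity map $(\Omega,|\cdot|)\to(\Omega,K_\Omega)$ is locally Lipschitz-inverse on compacta, and $\Omega$ is bounded), so by Arzel\`a--Ascoli a subsequence $\{\gamma_{k_m}\}$ converges uniformly on compacta to a curve $\gamma$; a standard argument shows $\gamma$ is again a geodesic with $\gamma(0)=z_0$, hence $\gamma\in\mathcal F$. Now invoke Lemma~\ref{Lem:uniform-conv-visibile}: since $\Omega$ is visible and $\{\gamma_{k_m}\}$ converges uniformly on compacta to the geodesic ray $\gamma$, the convergence is in fact uniform in the sense of the definition at the start of Section~\ref{S3}. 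This gives (4). (One must also use Lemma~\ref{lem:vis-land} to make sense of $\gamma(+\infty)$ and $\gamma_k(+\infty)$, exactly as in the proof of Lemma~\ref{Lem:uniform-conv-visibile}.)

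For condition (5), I need $\epsilon,\delta>0$ so that every $z$ with $\dist(z,\partial\Omega)<\epsilon$ lies within $K_\Omega$-distance $\delta$ of some $\gamma\in\mathcal F$. Here the key observation is that for \emph{every} point $z\in\Omega$ there is a geodesic ray through $z$ based at $z_0$: take $p:=\lim$ along a subsequence of the landing points of the geodesic segments $[z_0,w_n]_\Omega$ with $w_n\to\partial\Omega$ suitably chosen through $z$; more directly, one can take a sequence $w_n\to\partial\Omega$ with $z\in[z_0,w_n]_\Omega$ (possible since geodesics are length-minimizing and $\Omega$ is complete hyperbolic, so extend $[z_0,z]_\Omega$ to longer and longer geodesic segments), apply the Arzel\`a--Ascoli argument from (4) to extract a limit geodesic $\gamma\in\mathcal F$, and observe $z\in\gamma([0,+\infty))$ since $z$ is at fixed distance $K_\Omega(z_0,z)$ from $z_0$ along each $[z_0,w_n]_\Omega$ and the convergence is uniform on compacta. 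Then trivially $K_\Omega(z,\gamma([0,+\infty]))=0<\delta$ for any $\delta>0$, and $\epsilon>0$ is arbitrary. So (5) holds in the strong form that $\mathcal F$ reaches \emph{every} point of $\Omega$, not just those near the boundary.

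The step I expect to be the main obstacle is the construction in (5) of a geodesic ray passing through a \emph{prescribed} point $z$ and based at $z_0$: one must check carefully that extending the geodesic segment $[z_0,z]_\Omega$ to geodesic segments $[z_0,w_n]_\Omega$ with $w_n$ escaping to the boundary is legitimate, i.e., that the initial segment $[z_0,z]_\Omega$ really is an initial subsegment of some longer geodesic and that the lengths go to infinity—this uses completeness and the Hopf--Rinow theorem, together with the fact that $\Omega$ is bounded so geodesic rays cannot have finite length. Everything else reduces to the Arzel\`a--Ascoli machinery plus the already-proved Lemmas~\ref{lem:vis-land} and~\ref{Lem:uniform-conv-visibile}.
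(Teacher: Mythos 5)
Your construction is the same basepoint construction as the paper's, and conditions (1)--(4) are handled essentially as in the paper (Arzel\`a--Ascoli plus Lemma~\ref{lem:vis-land} and Lemma~\ref{Lem:uniform-conv-visibile}). The genuine gap is in your treatment of condition (5). Because you restrict $\mathcal F$ to geodesic \emph{rays} from $z_0$, you need every $z\in\Omega$ (or at least every $z$ close to $\partial\Omega$) to lie on, or within a uniform $K_\Omega$-distance $\delta$ of, such a ray, and your argument for this is to ``extend $[z_0,z]_\Omega$ to longer and longer geodesic segments $[z_0,w_n]_\Omega$ with $z\in[z_0,w_n]_\Omega$''. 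But a geodesic segment in a complete geodesic metric space need not be extendable past its endpoint as a geodesic: Hopf--Rinow gives the existence of a geodesic between any two points, not that a prescribed segment $[z_0,z]_\Omega$ is an initial subsegment of a longer geodesic, and no geodesic extension property is known (or assumed) for the Kobayashi metric of a general bounded complete hyperbolic domain. If instead you pick arbitrary $w_n\to\partial\Omega$ and pass to a limit of $[z_0,w_n]_\Omega$, you do get a geodesic ray from $z_0$, but there is no reason it passes through, or even uniformly near, the prescribed point $z$; visibility alone gives no quantitative statement that rays from $z_0$ come within a fixed $\delta$ of every point near the boundary. So the step you yourself flag as ``the main obstacle'' is indeed where the proof breaks.

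The fix is exactly the freedom built into Definition~\ref{Def:frame}: condition (1) allows members of $\mathcal F$ with $R_\gamma<+\infty$, i.e.\ geodesic \emph{segments}. The paper takes $\mathcal F$ to be all geodesic segments and rays emanating from $z_0$; then (5) becomes trivial for any $\epsilon,\delta>0$, since every $w\in\Omega$ is the endpoint of some $[z_0,w]_\Omega\in\mathcal F$ and hence at distance $0$ from a member of the family, while (4) still follows from the same Arzel\`a--Ascoli argument (segments whose lengths tend to $+\infty$ converge, up to subsequence, uniformly on compacta to a ray, and Lemma~\ref{Lem:uniform-conv-visibile} upgrades this to uniform convergence; if the lengths stay bounded the limit is a segment and uniform convergence is immediate). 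With that modification your argument goes through; as written, the claim that every point of $\Omega$ lies on a geodesic ray from $z_0$ is unjustified.
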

\begin{proof}
Fix $z_0\in\Omega$ and let $\mathcal F$ be the set of all geodesic steaming from $z_0$, {\sl i.e.}, $\gamma\in\mathcal F$ if either there exists $R_\gamma\in (0,+\infty)$ such that $\gamma:[0,R_\gamma]\to \Omega$ is a geodesic segment or $\gamma:[0,+\infty)\to \Omega$ is a geodesic ray, and (in both cases) $\gamma(0)=z_0$. By definition, $\mathcal F$ satisfies condition (1) and (3) of Definition~\ref{Def:frame}. Also, since the distance $K_\Omega$ is complete, by Hopf-Rinow's theorem, for every $w\in \Omega$ there exists a geodesic segment $\gamma$ joining $z_0$ and $w$, hence, (5) is trivially satisfied for any $\epsilon>0$ and $\delta>0$.

Moreover, since $\Omega$ is visible, it follows from Lemma~\ref{lem:vis-land} that $\mathcal F$ enjoys also (2).

Now, we show that $\mathcal F$ satisfies (4).  Assume $\{\gamma_k\}$ is a sequence of geodesic steaming from $z_0$. 

Note that for every $k$, either  $\gamma_k$ is a geodesic segment defined on $[0,R_k]$ for some $R_k>0$ or $\gamma_k$ is geodesic ray defined on $[0,+\infty)$. However, as remarked above, in this case there exists $\gamma_k(+\infty):=\lim_{t\to+\infty}\gamma_k$, hence, we can consider $\gamma_k$ to be defined on $[0,+\infty]$ and set $R_k=+\infty$. 

By Arzel\`a-Ascoli's theorem, it follows that, up to subsequences, $\{\gamma_k\}$ converges {\sl uniformly on compacta} to some $\gamma\in\mathcal F$. By Lemma~\ref{Lem:uniform-conv-visibile} the convergence is actually uniform, and we are done. \end{proof}

Now we are in a good shape to prove Theorem~\ref{thm:char-vis-frame}:

\begin{proof}[Proof of Theorem~\ref{thm:char-vis-frame}]

One direction follows from Proposition~\ref{Prop:vis-frame}.

Conversely, Assume that $\Omega$ has a family $\mathcal F$ of $(A,B)$-quasi-geodesics. Hence, there exists a compact subset $K\subset \Omega$ such that every quasi-geodesic  in $\mathcal F$ steams from $K$. Assume $\{z^j_k\}\subset \Omega$ be a sequence converging to $p_j\in\partial \Omega$, $j=1,2$ with $p_1\neq p_2$. 

We argue by contradiction and we assume that $\{[z^1_k,z^2_k]_\Omega\}$ eventually escapes every compact subset of $\Omega$. By hypothesis, for $k$ large enough, we can find $\gamma_k^1, \gamma_k^2\in\mathcal F$  and $w_k^j\in\gamma_k^j$ such that  $K_\Omega(z_k^j, w_k^j)< \delta$, $j=1,2$ (here, we a slight abuse of notation, we denote by $\gamma_k^j$ also the image of the quasi-geodesic $\gamma_k^j$). 

By Lemma~\ref{Daddezio}, since $\partial \Omega$ does not contain nontrivial analytic discs, it follows that $\{w_k^j\}$ converges to $p_j$, $j=1,2$ for $k\to+\infty$. 

By condition (4) of Definition~\ref{Def:frame}, we can assume, up to subsequences, that $\{\gamma_k^j\}$ converges uniformly to some $\gamma^j\in\mathcal F$, $j=1,2$. Hence, 
\begin{equation}\label{Eq:geo-fram-g}
\lim_{t\to+\infty}\gamma^j(t)=p_j, \quad j=1,2.
\end{equation}

We claim now that $\{[w^1_k,w^2_k]_\Omega\}$ eventually escapes every compact subset of $\Omega$, if so does $\{[z^1_k,z^2_k]_\Omega\}$. Indeed, assume by contradiction that there exists a compact set $Q\subset\subset \Omega$ and $c_k\in [w^1_k,w^2_k]_\Omega$ such that $c_k\in Q$ for every $k$. Since 
\[
[w^1_k,w^2_k]_\Omega\cup [z^1_k,z^2_k]_\Omega\cup [z^1_k,w^1_k]_\Omega\cup [z^2_k,w^2_k]_\Omega
\]
is a geodesic rectangle, by Lemma~\ref{Lem:thick-rect}, it follows that there exist $N>0$ and  
\[
t_k\in [z^1_k,z^2_k]_\Omega\cup [z^1_k,w^1_k]_\Omega\cup [z^2_k,w^2_k]_\Omega
\]
such that $K_\Omega(t_k, c_k)\leq N$. Since for every $s\in [z^j_k,w^j_k]_\Omega$ we have that $K_\Omega(s, z^j_k)\leq K_\Omega(w^j_k, z^j_k)< \delta$, $j=1,2$, it follows from the completeness of $K_\Omega$ that $t_k\in  [z^1_k,z^2_k]_\Omega$. But then $t_k$ belongs to a $N$-tubular neighborhood of $Q$, which is as well a relatively compact subset of $\Omega$, and we get a contradiction.

Therefore, $\{[w^1_k,w^2_k]_\Omega\}$ eventually escapes every compact subset of $\Omega$.

Let $a^j_k>0$ be such that $\gamma^j_k(a_k^j)=w_k^j$, $j=0,1$ and consider now the quasi-geodesic rectangle
\[
[w^1_k,w^2_k]_\Omega \cup \gamma_k^1([0,a_k^1])\cup \gamma_k^2([0,a_k^1])\cup [\gamma_k^1(0), \gamma_k^2(0)]_\Omega.
\]
Note that $\{[\gamma_k^1(0), \gamma_k^2(0)]_\Omega\}$ is relatively compact in $\Omega$. Hence, by Lemma~\ref{Lem:thick-rect}, there exists $N>0$ such that for every point $s_k\in [w^1_k,w^2_k]_\Omega$ there exists $\zeta(s_k)\in \gamma_k^1([0,a_k^1])\cup \gamma_k^2([0,a_k^1])$ such that 
\begin{equation}\label{Eq:close-geo-goF}
K_\Omega(s_k, \zeta(s_k))\leq N.
\end{equation}
Since $\{s_k\}$ is compactly divergent, it follows by the completeness of $K_\Omega$ that $\{\zeta(s_k)\}$ is compactly divergent as well. By \eqref{Eq:geo-fram-g}, the cluster set of $\{\zeta(s_k)\}$ is contained in $\{p_1\}\cup\{p_2\}$ and, by \eqref{Eq:close-geo-goF} and Lemma~\ref{Daddezio}, so is the cluster set of $\{s_k\}$. But this is a contradiction because the cluster set of $[w^1_k,w^2_k]_\Omega$ has to be connected and contains both $p_1$ and $p_2$. Hence, $\Omega$ is visible.
\end{proof}

\section{Geodesic loops in visible domains}\label{S4}

\begin{definition}
Let $\Omega\subset\subset \C^d$ be a domain such that $(\Omega, K_\Omega)$ is complete hyperbolic. We say that two quasi-geodesic rays $\gamma, \eta$ {\sl stay at infinite distance each other} provided for every $C>0$ there exists $t_C>0$ such that
\[
\min_{t\geq t_C} K_\Omega(\gamma(t), \eta([0,+\infty))>C, \quad \min_{t\geq t_C} K_\Omega(\eta(t), \gamma([0,+\infty))>C.
\]
\end{definition}

\begin{lemma}\label{Lem:infinite-dist-ray}
Let $\Omega\subset\subset \C^d$ be a domain such that $(\Omega, K_\Omega)$ is complete hyperbolic. Assume $\gamma:\R\to\Omega$ is a geodesic loop for $\overline{D}$. Let $\gamma^+:[0,+\infty)\to\Omega$ be the geodesic ray defined by $\gamma^+(t):=\gamma(t)$ and let $\gamma^-:[0,+\infty)\to\Omega$ be the geodesic ray defined by $\gamma^-(t):=\gamma^-(t)$. Then $\gamma^+$ and $\gamma^-$ stay at infinite distance each other.
\end{lemma}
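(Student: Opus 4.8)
The statement asserts that for a geodesic loop $\gamma:\R\to\Omega$ for $\overline{\Omega}$, the two rays $\gamma^+(t)=\gamma(t)$ and $\gamma^-(t)=\gamma(-t)$, $t\geq 0$, stay at infinite distance from each other. Since $\gamma$ is a single geodesic line, the combined curve $\gamma^-\cup\gamma^+$ is itself a geodesic, so distances along it are additive: $K_\Omega(\gamma(s),\gamma(t))=|s-t|$ for all $s,t\in\R$. The plan is to argue by contradiction: if the two rays do \emph{not} stay at infinite distance, one finds a uniform bound $C$ and a sequence $t_k\to+\infty$ with, say, $K_\Omega(\gamma(t_k),\gamma([0,-\infty)))\leq C$ (after renaming), i.e. points $s_k\leq 0$ with $K_\Omega(\gamma(t_k),\gamma(s_k))\leq C$. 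By the triangle inequality and additivity along the line, $|t_k-s_k|=K_\Omega(\gamma(t_k),\gamma(s_k))\leq C$, so $s_k\geq t_k-C\to+\infty$, contradicting $s_k\leq 0$. This already rules out the ``min over the far ray's whole image'' being bounded along a sequence — but one must be careful that the definition of ``stay at infinite distance'' quantifies over \emph{all} $t\geq t_C$ simultaneously, not just a sequence.

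\textbf{Key steps in order.} First I would record the additivity identity: because $\gamma$ restricted to any compact interval of $\R$ is length-minimizing and $\gamma$ is a line through all of $\R$, one has $K_\Omega(\gamma(a),\gamma(b))=|b-a|$ for all $a,b\in\R$; this is immediate from the definition of a geodesic line (it is parametrized by arclength in $K_\Omega$ and globally minimizing). Second, fix $C>0$; I want $t_C$ so that for all $t\geq t_C$, $K_\Omega(\gamma(t),\gamma((-\infty,0]))>C$ and symmetrically. Take $t_C:=C$. Then for any $t\geq t_C$ and any $s\leq 0$, $K_\Omega(\gamma(t),\gamma(s))=t-s\geq t\geq C>C-\text{(nothing)}$... here I need a strict inequality, so take $t_C:=C+1$, giving $K_\Omega(\gamma(t),\gamma(s))=t-s\geq t\geq C+1>C$. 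Since $\gamma^-([0,+\infty))=\gamma((-\infty,0])$, this says exactly $K_\Omega(\gamma^+(t),\gamma^-([0,+\infty)))>C$ for all $t\geq t_C$, and by the same computation with the roles reversed $K_\Omega(\gamma^-(t),\gamma^+([0,+\infty)))>C$ for all $t\geq t_C$. That is the conclusion.

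\textbf{What is actually going on / the main subtlety.} There is essentially no obstacle here: the whole content is the elementary observation that a geodesic line is globally distance-realizing, so the two halves genuinely separate linearly in the intrinsic metric — the Euclidean closure and the loop hypothesis play no role in \emph{this} lemma (the loop hypothesis only guarantees, via Lemma~\ref{lem:vis-land} in the visible setting, that such lines are the objects of interest elsewhere). The one place to be careful is the direction of the inequality and making sure the bound $t-s\geq t$ is being used with $s\leq 0$ so that $-s\geq 0$; and to double-check the definition of ``stay at infinite distance'' is the min over the far ray's full image (not the far ray's tail), which makes the bound only easier to obtain. I would also note in passing that the displayed ray $\gamma^-(t):=\gamma^-(t)$ in the statement is a typo for $\gamma^-(t):=\gamma(-t)$, and proceed with that reading. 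So the proof is two lines: additivity plus $t-s\geq t\geq t_C$.

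\begin{proof}
Since $\gamma:\R\to\Omega$ is a geodesic line, it is parametrized by arclength with respect to $K_\Omega$ and globally length-minimizing; hence $K_\Omega(\gamma(a),\gamma(b))=|b-a|$ for all $a,b\in\R$. Fix $C>0$ and set $t_C:=C+1$. For every $t\geq t_C$ and every $s\leq 0$ we have
\[
K_\Omega(\gamma^+(t),\gamma^-(s))=K_\Omega(\gamma(t),\gamma(-s))=t+s'\geq t\geq C+1>C,
\]
where $s'=-s\geq 0$. Since $\gamma^-([0,+\infty))=\gamma((-\infty,0])=\{\gamma(-s): s\geq 0\}$, taking the minimum over $s\geq 0$ gives
\[
\min_{t\geq t_C}K_\Omega(\gamma^+(t),\gamma^-([0,+\infty)))>C.
\]
Exchanging the roles of $\gamma^+$ and $\gamma^-$, the identical computation yields, for every $t\geq t_C$,
\[
K_\Omega(\gamma^-(t),\gamma^+([0,+\infty)))=\min_{s\geq 0}K_\Omega(\gamma(-t),\gamma(s))=t>C,
\]
so $\min_{t\geq t_C}K_\Omega(\gamma^-(t),\gamma^+([0,+\infty)))>C$. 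As $C>0$ was arbitrary, $\gamma^+$ and $\gamma^-$ stay at infinite distance from each other.
\end{proof}
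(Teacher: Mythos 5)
Your proof is correct and uses exactly the same key fact as the paper, namely the additivity $K_\Omega(\gamma(a),\gamma(b))=|a-b|$ along the geodesic line (the paper merely phrases it as a contradiction: a bound $C$ would force $t+s_t\leq C$ for all $t$, which is absurd as $t\to+\infty$). Only a notational slip to fix in your first display: for $s\leq 0$ the point of the backward ray is $\gamma(s)=\gamma^-(-s)$, so you should write $K_\Omega(\gamma(t),\gamma(s))=t-s=t+s'$ rather than $K_\Omega(\gamma(t),\gamma(-s))$; the estimate itself is unaffected.
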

\begin{proof}
Assuming by contradiction that $\gamma^+, \gamma^-$ do not stay at infinite distance each other, we can find $C>0$ such that for every $t\in [0,+\infty)$ there exists $s_t\in [0,+\infty)$ such that 
\[
K_\Omega(\gamma^+(t), \gamma^-(s_t))\leq C.
\]
But, since $\gamma$ is a geodesic,
\[
t+s_t=K_\Omega(\gamma(t),\gamma(-s_t))=K_\Omega(\gamma^+(t), \gamma^-(s_t))\leq C,
\]
and, for $t\to +\infty$ we have a contradiction.
\end{proof}

\begin{proposition}\label{Prop:visibi-no-cont}
Let $\Omega\subset\subset \C^d$ be a domain such that $(\Omega, K_\Omega)$ is complete hyperbolic. Assume that $\Omega$ is visible. If there exists $p\in\partial \Omega$ and an open neighborhood $U$ of $p$ such that $U\cap \Omega$ has at least two connected components such that $p$ belongs to their closure, then there exists a geodesic loop for $\overline{\Omega}$ with vertex  $p$.
\end{proposition}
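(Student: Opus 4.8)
The plan is to realize the geodesic loop as a limit of geodesics joining two sequences that both converge to $p$ but lie in the two different components of $U\cap\Omega$, and then to locate the cluster sets of this limit geodesic using visibility. Concretely, let $\Omega_1\neq\Omega_2$ be the two components of $U\cap\Omega$ with $p\in\overline{\Omega_1}\cap\overline{\Omega_2}$; pick $z_k\in\Omega_1$ and $w_k\in\Omega_2$ with $z_k,w_k\to p$, and let $\gamma_k\colon[0,L_k]\to\Omega$ be a geodesic parametrized by arclength joining $z_k$ to $w_k$ (it exists by Hopf--Rinow). The first, elementary, observation is that $\gamma_k([0,L_k])$ is not contained in $U$: being connected it would otherwise lie in a single component of $U\cap\Omega$, contradicting $z_k\in\Omega_1$, $w_k\in\Omega_2$. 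The crucial step is then to show that, after passing to a subsequence, there is a fixed compact $K\subset\Omega$ with $\gamma_k([0,L_k])\cap K\neq\emptyset$ for all $k$.

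To prove this I would argue by contradiction: if no such $K$ and subsequence exist, then (the index set being $\N$) for \emph{every} compact $K$ one has $\gamma_k([0,L_k])\cap K=\emptyset$ for all large $k$, so every subsequential Euclidean limit of points lying on the $\gamma_k$ belongs to $\partial\Omega$; call $\Gamma\subset\partial\Omega$ this cluster set, which is nonempty and contains $p$ (take the points $\gamma_k(0)=z_k$). If $\Gamma=\{p\}$, then for any neighborhood $W\subset U$ of $p$ one gets $\gamma_k([0,L_k])\subset W\subset U$ for all large $k$ (otherwise one could extract a cluster point of the $\gamma_k$ outside $W$, hence different from $p$), contradicting that $\gamma_k$ leaves $U$. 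If instead $\Gamma$ contains some $q\neq p$, choose $t_k$ (along a subsequence) with $\gamma_k(t_k)\to q$; applying the visibility condition for the pair $(p,q)$ to the geodesic segments $\gamma_k|_{[0,t_k]}$, whose endpoints $z_k$ and $\gamma_k(t_k)$ eventually lie in the disjoint neighborhoods $V_p$ and $V_q$ furnished by visibility, shows that these segments meet a fixed compact of $\Omega$ for infinitely many $k$ --- again contradicting that the $\gamma_k$ escape every compact. This establishes the claim.

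Having fixed $K$, I would recenter: pick $x_k=\gamma_k(m_k)\in K$ and, up to a further subsequence, $x_k\to x_\infty\in K\subset\Omega$. Since $z_k\to p\in\partial\Omega$ while $x_k$ stays in the compact $K$, completeness of $K_\Omega$ forces $m_k=K_\Omega(z_k,x_k)\to+\infty$, and similarly $L_k-m_k=K_\Omega(x_k,w_k)\to+\infty$. The curves $\widetilde\gamma_k:=\gamma_k(\,\cdot\,+m_k)$, defined on $[-m_k,L_k-m_k]$, are $1$-Lipschitz for $K_\Omega$ with $\widetilde\gamma_k(0)=x_k$; by Arzel\`a--Ascoli together with the completeness of $(\Omega,K_\Omega)$ (closed metric balls are compact), a subsequence converges uniformly on compacta to a geodesic line $\gamma\colon\R\to\Omega$. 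As $\Omega$ is visible, Lemma~\ref{lem:vis-land} provides landing points $p^{-}=\lim_{t\to-\infty}\gamma(t)$ and $p^{+}=\lim_{t\to+\infty}\gamma(t)$ in $\partial\Omega$, and it only remains to show $p^{-}=p^{+}=p$.

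For this, look at the reversed left halves $\check\gamma_k(t):=\widetilde\gamma_k(-t)$, $t\in[0,m_k]$: these are geodesic segments from $x_k$ to $z_k$, and they converge uniformly on compacta to the geodesic ray $\check\gamma(t):=\gamma(-t)$, which lands at $p^{-}$. By Lemma~\ref{Lem:uniform-conv-visibile} the convergence $\check\gamma_k\to\check\gamma$ is in fact uniform in the sense of Section~\ref{S3}; evaluating at the moving endpoint $t=m_k$ gives, for every $\theta>0$, the bound $|z_k-\check\gamma(m_k)|\le\theta$ for all large $k$, and letting $k\to+\infty$ (so that $z_k\to p$ and $\check\gamma(m_k)\to p^{-}$, since $m_k\to+\infty$) yields $|p-p^{-}|\le\theta$ for all $\theta>0$, hence $p^{-}=p$. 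The symmetric argument for the right halves $\widetilde\gamma_k|_{[0,L_k-m_k]}$ (geodesic segments from $x_k$ to $w_k$, converging to the ray $\gamma|_{[0,+\infty)}$ landing at $p^{+}$) gives $p^{+}=p$. Thus $\gamma$ is a geodesic line whose cluster sets at $+\infty$ and $-\infty$ both reduce to $\{p\}$, i.e.\ a geodesic loop for $\overline{\Omega}$ with vertex $p$. The main obstacle is the compactness claim, and within it the alternative $\Gamma=\{p\}$: one must exclude that $\gamma_k$ recedes entirely into arbitrarily small neighborhoods of $p$, which is exactly where the non-inclusion $\gamma_k([0,L_k])\not\subset U$ is decisive; the secondary delicate point is the upgrade from uniform-on-compacta to genuine uniform convergence used to control the endpoints, which is precisely what Lemma~\ref{Lem:uniform-conv-visibile} supplies.
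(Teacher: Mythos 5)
Your proof is correct and follows essentially the same strategy as the paper: geodesics joining points of the two components must meet a fixed compact set (forced by visibility together with the observation that a connected geodesic cannot stay inside $U$), and a recentered Arzel\`a--Ascoli limit then yields the geodesic loop. The differences are minor variations within that scheme: you replace the paper's auxiliary sets $\mathcal{V}_j$ (chosen so that $p\notin\partial\mathcal{V}_j$) by a dichotomy on the Euclidean cluster set of the escaping geodesics, and you justify that both ends of the limit geodesic land at $p$ via the uniform convergence of Lemma~\ref{Lem:uniform-conv-visibile}, a step the paper settles with only a citation of Lemma~\ref{lem:vis-land}.
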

\begin{proof}
Let $p, U$ as in the statement and let $V_1, V_2$ be two connected components of $\Omega\cap U$ such that $p\in \overline{V_j}$, $j=1,2$. For $j=1,2$, let $\mathcal V_j\subset\subset V_j$ be an open set such that $p\in \overline{\mathcal{V}_j}$ and $p\not\in \partial\mathcal V_j$. Moreover, for $j=1,2$, let $\{z^j_n\}\subset \mathcal V_j$ be a sequence converging to $p$. For every $n$,  let $\gamma_n: [a_n, b_n]:\to \Omega$ be a geodesic such that $\gamma_n(a_n)=z^1_n$ and $\gamma_n(b_n)=z^2_n$, $a_n<b_n$. 

We claim that there exists a compact subset $K\subset \Omega$ such that $\gamma_n([a_n, b_n])\cap K\neq\emptyset$ for all $n$. Indeed, if this were not the case, we can assume that $\{\gamma_n([a_n, b_n])\}$ escapes every compact subset of $\Omega$ for $n$ large. Let $r_n\in (a_n, b_n)$ be such that $\gamma_n(r_n)\in \partial \mathcal V_2$, for all $t$. Then, up to subsequences, $\{\gamma_n(r_n)\}$ converges, for $n\to\infty$, to a point $q\in\partial\Omega\setminus\{p\}$. Since $\Omega$ is visible, it follows that there exists a compact subset $K'\subset\Omega$ such that $\gamma_n([r_n, b_n])\cap K'\neq\emptyset$, reaching a contradiction. 

Up to an affine change of parameterization, we can thus assume that $\{\gamma_n(0)\}$ is relatively compact in $\Omega$ and $a_n<0<b_n$ for all $n$. 

We claim that $\{\gamma_n\}$ (under the assumption that $\{\gamma_n(0)\}$ is relatively compact in $\Omega$) converges, up to subsequences, to a geodesic loop in $\overline{\Omega}$ with vertex  $p$. 

Indeed, since $(\Omega, K_\Omega)$ is complete and $\gamma_n(0)\in K$,
\[
\lim_{n\to\infty} |a_n|=\lim_{n\to\infty} K_\Omega(\gamma_n(0),z_n^1)=+\infty,
\]
and thus $a_n\to-\infty$ for $n\to \infty$. Similarly, $b_n\to +\infty$ for $n\to\infty$. Thus, for every $S,T\in \R$ there exists $n_0$ such that  $S,T\in(a_n, b_n)$ for $n>n_0$. Moreover, setting  for $R>0$, 
\[
d_K(R):=\max\{K_\Omega(z,w): K_\Omega(z,K)\leq R, K_\Omega(w,K)\leq R\}<+\infty,
\]
we have, for all  $n>n_0$,
\[
K_\Omega(\gamma_n(S), \gamma_n(T))= K_\Omega(\gamma_n(0),\gamma_n(S))+K_\Omega(\gamma_n(0),\gamma_n(T))\leq d_K(|S|)+d_K(|T|).
\]
Hence $\{\gamma_n\}$ are locally equibounded and locally equi-continuous and by Arzel\`a-Ascoli's theorem, and taking into account that $K_D(\gamma_n(T), \gamma_n(S))=|T-S|$ for all $n$, we can extract a subsequence converging on compacta of $\R$ to a geodesic line $\gamma$ such that $\gamma(0)\in K$. 

By Lemma~\ref{lem:vis-land}, $\lim_{t\to\pm \infty}\gamma(t)=p$. Hence, $\gamma$ is a geodesic loop in $\overline{\Omega}$ with vertex  $p$, and we are done.
\end{proof}

For Gromov hyperbolic domains the existence of a geodesic loop is equivalent to the existence of two geodesic rays  landing at the same point which stay at infinite distance each other:
\begin{proposition}\label{Prop:geo-loop-inf}
Let $\Omega\subset\subset \C^d$ be a domain such that $(\Omega, K_\Omega)$ is complete hyperbolic and Gromov hyperbolic. Assume that $\Omega$ is visible. Then there exists a geodesic loop for $\overline{\Omega}$ with vertex $p\in\partial\Omega$ if and only if there exists two geodesic rays in $\Omega$ landing at $p$ which stay at infinite distance each other.
\end{proposition}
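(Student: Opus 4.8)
The plan is to prove the two implications separately; the forward one is immediate from the tools already developed. If $\gamma\colon\R\to\Omega$ is a geodesic loop with vertex $p$, split it into the geodesic rays $\gamma^+(t)=\gamma(t)$ and $\gamma^-(t)=\gamma(-t)$, $t\ge 0$. By Lemma~\ref{lem:vis-land} each of them lands, and since the cluster set of each coincides with the common cluster set $\{p\}$ of $\gamma$ at $\pm\infty$, both land at $p$; by Lemma~\ref{Lem:infinite-dist-ray} they stay at infinite distance from each other. So the content is the converse.

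For the converse, let $\gamma_1,\gamma_2\colon[0,+\infty)\to\Omega$ be geodesic rays, both landing at $p$, staying at infinite distance from each other, and for each $n$ fix a geodesic segment $\sigma_n=[\gamma_1(n),\gamma_2(n)]_\Omega$. The first and crucial step is to produce a compact set $K\subset\Omega$, independent of $n$, meeting $\sigma_n$ for all large $n$ --- that is, to show the $\sigma_n$ do not all run off to $\partial\Omega$. For this I apply Lemma~\ref{Lem:thick-rect} to the geodesic rectangle with sides $\gamma_1([0,n])$, $\sigma_n$, $\gamma_2([0,n])$ and $[\gamma_1(0),\gamma_2(0)]_\Omega$, obtaining $N>0$ depending only on the Gromov constant such that every point of $\gamma_1([0,n])$ lies within $K_\Omega$-distance $N$ of the union of the other three sides. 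Fixing $t^\ast$ larger than both $N+K_\Omega(\gamma_1(0),\gamma_2(0))$ and the threshold that the infinite-distance hypothesis attaches to the constant $N+1$, the point $\gamma_1(t^\ast)$ cannot be $N$-close to $\gamma_2([0,n])\subset\gamma_2([0,+\infty))$ (by the infinite-distance condition), nor to $[\gamma_1(0),\gamma_2(0)]_\Omega$ (by the triangle inequality, since it is at $K_\Omega$-distance $\ge t^\ast-K_\Omega(\gamma_1(0),\gamma_2(0))>N$ from it), so it must be $N$-close to $\sigma_n$; hence $\sigma_n$ meets $K:=\{z\in\Omega:K_\Omega(z,\gamma_1(t^\ast))\le N\}$, a compact subset of $\Omega$ by Hopf--Rinow. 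I expect this to be the main obstacle: it is the only step where ``staying at infinite distance'' is genuinely used.

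Given this, reparametrize $\sigma_n$ as $\sigma_n\colon[a_n,b_n]\to\Omega$ with $\sigma_n(0)\in K$, $\sigma_n(a_n)=\gamma_1(n)$, $\sigma_n(b_n)=\gamma_2(n)$, $a_n<0<b_n$. Since $\gamma_1(n),\gamma_2(n)\to p\in\partial\Omega$ while $\sigma_n(0)$ stays in $K$, completeness of $K_\Omega$ forces $a_n\to-\infty$ and $b_n\to+\infty$; as the $\sigma_n$ are $1$-Lipschitz for $K_\Omega$ and locally equibounded, Arzel\`a--Ascoli gives a subsequence converging locally uniformly on $\R$ to a geodesic line $\gamma$ with $\gamma(0)\in K$. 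By Lemma~\ref{lem:vis-land}, $\gamma(t)$ lands at some $q_+\in\partial\Omega$ as $t\to+\infty$, and it remains to show $q_+=p$ (and symmetrically $\gamma(t)\to p$ as $t\to-\infty$). If $q_+\ne p$, apply the visibility condition to the pair $(q_+,p)$: there are disjoint neighborhoods of $q_+$ and $p$ and a compact $K'\subset\Omega$ met by every geodesic joining a point near $q_+$ to a point near $p$. For each large $m$, $\sigma_n(m)\to\gamma(m)$ lies near $q_+$ and $\sigma_n(b_n)=\gamma_2(n)$ lies near $p$ once $n$ is large, so $\sigma_n([m,b_n])$ meets $K'$ at some $\sigma_n(u_n)$; from $K_\Omega(\sigma_n(m),\sigma_n(u_n))=u_n-m$ together with $\sigma_n(m)\to\gamma(m)$ and $\sigma_n(u_n)\in K'$ one bounds $u_n$ above by a constant depending only on $m$, so a subsequence gives $u_n\to u^\ast_m\ge m$ with $\gamma(u^\ast_m)\in K'$. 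Letting $m\to+\infty$ produces points $\gamma(u^\ast_m)\in K'$ with $u^\ast_m\to+\infty$, so $q_+=\lim_m\gamma(u^\ast_m)\in K'\subset\Omega$, a contradiction. Hence $q_+=p$, and the same argument using the endpoint $\sigma_n(a_n)=\gamma_1(n)\to p$ shows $\gamma(t)\to p$ as $t\to-\infty$; thus $\gamma$ is a geodesic loop with vertex $p$, as required.
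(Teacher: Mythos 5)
Your proof is correct and follows essentially the same route as the paper: split the loop into rays for the easy direction, and for the converse apply Lemma~\ref{Lem:thick-rect} to the rectangles bounded by the two rays and the connecting geodesics, use the infinite-distance hypothesis to force those geodesics through a fixed compact set, then conclude with Arzel\`a--Ascoli and visibility. The only (harmless) variation is in locating that compact set: the paper argues by connectedness of $[\alpha(T),\beta(T)]_\Omega$ against the $4\delta$-separation of the far parts of the rays, whereas you fix a far point $\gamma_1(t^\ast)$ on one ray and show it must be $N$-close to the connecting geodesic itself; you also spell out in more detail the landing-at-$p$ step that the paper handles by reference to Proposition~\ref{Prop:visibi-no-cont}.
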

\begin{proof}
If $\gamma$ is a geodesic loop for $\overline{\Omega}$ with vertex $p$, then $\gamma^+$ and $\gamma^-$ are geodesic rays at infinite distance each other, landing at $p$ (see Lemma~\ref{Lem:infinite-dist-ray}).

Conversely, if $\alpha, \beta: [0,+\infty)\to \Omega$ are two geodesic rays which stay at infinite distance each other, for each $T>0$, we consider the geodesic rectangle given by
\[
[\alpha(0),\beta(0)]_\Omega\cup \alpha([0,T])\cup [\alpha(T),\beta(T)]_\Omega\cup \beta([0,T]).
\]
Since $(\Omega, K_\Omega)$ is Gromov hyperbolic, by Lemma~\ref{Lem:thick-rect} there exists $\delta>0$ such that $[\alpha(T),\beta(T)]_\Omega$ is contained in the $\delta$-tubular neighborhood (with respect to $K_\Omega$) of $[\alpha(0),\beta(0)]_\Omega\cup \alpha([0,T])\cup \beta([0,T])$.

Since $\alpha$ and $\beta$ stay at infinite distance each other, it follows that there exists $T_0>0$ such that $K_\Omega(\alpha(t),\beta(s))>4\delta$ for all $t,s\geq T_0$. 

We claim that for all $T>T_0$ there exists $z_T\in [\alpha(T),\beta(T)]_\Omega$ such that $z_T$ belongs to the $\delta$-tubular neighborhood $\mathcal N$ of $[\alpha(0),\beta(0)]_\Omega\cup \alpha([0,T_0)]\cup \beta([0,T_0])$. 

Indeed, if this were not the case, then  every $\xi\in  [\alpha(T),\beta(T)]_\Omega$ would belong to a $\delta$-tubular neighborhood $U$ of $\alpha([T_0, T])\cup \beta([T_0, T])$. However, since $K_\Omega(\alpha(t),\beta(s))>4\delta$ for all $t,s\geq T_0$, it follows that $U$ is the disjoint union of two open sets. But then $[\alpha(T),\beta(T)]_\Omega$ is the disjoint union of the two open sets given by $[\alpha(T),\beta(T)]_\Omega\cap U$, hence, not connected, a contradiction and the claim follows. 

Now, since $K_\Omega$ is a complete distance, $K:=\overline{\mathcal N}$ is a compact subset of $\Omega$ and 
\[
[\alpha(T),\beta(T)]_\Omega\cap K\neq\emptyset \quad \forall T\geq 0.
\]
Choosing a parametrization $\gamma_T$ of $[\alpha(T),\beta(T)]_\Omega$ such that $\gamma_T(0)\in K$ and arguing as in the proof of Proposition~\ref{Prop:visibi-no-cont}, we see that we can extract a sequence $\{\gamma_{T_n}\}$ converging to a geodesic loop for $\overline{\Omega}$ with vertex $p$.
\end{proof}

Now we are in good shape to prove Theorem~\ref{Thm:char-geo-loop}: 

\begin{proof}[Proof of Theorem~\ref{Thm:char-geo-loop}]
First, assume that $\Omega$ has no geodesic loops. Then by Proposition~\ref{Prop:vis-frame} (and its proof), the family $\mathcal F$ of all geodesics steaming from a given point $z_0\in\Omega$ is a geodesic frame. Such a geodesic frame is non-looping by Proposition~\ref{Prop:geo-loop-inf}.

Suppose now that $\Omega$ has a non-looping frame of quasi-geodesics $\mathcal F$. Assume by contradiction that $\gamma:\R \to \Omega$ is a geodesic loop. Since $\Omega$ is visible, it follows that there exists $p\in\partial \Omega$ such that $\lim_{t\to\pm \infty} \gamma(t)=p$. Let $\delta>0, \epsilon>0$ be given as in (5) of Definition~\ref{Def:frame} and let 
\[
V:=\{z\in\Omega: \hbox{dist}(z, \partial\Omega)<\epsilon\}.
\] 

Let $\gamma^+(t):=\gamma(t)$ for $t\geq 0$ and $\gamma^-(t):=\gamma(-t)$ for $t\geq 0$. 

Hence, there exists $t_0>0$ such that for $t\geq t_0$ sufficiently large, $\gamma^{\pm}(t)\in V$.  

By hypothesis, for each $k\geq t_0$, $k\in\N$, there exist $\gamma^{\pm}_k\in\mathcal F$ and some $s^{\pm}_k$ in the domain of definition of $\gamma^{\pm}_k$ such that 
\begin{equation}\label{Eq:stay-close+-}
K_\Omega(\gamma^{\pm}(k), \gamma^{\pm}_k(s^{\pm}_k))\leq \epsilon.
\end{equation}
As $k\to+\infty$, since $K_\Omega$ is a complete distance and by Lemma~\ref{Daddezio}, it follows that  $s^{\pm}_k\to +\infty$ and $\gamma^{\pm}_k(s^{\pm}_k)\to p$. By definition of quasi-geodesic frame, up to subsequences, $\{\gamma^{\pm}_k\}$ converges uniformly to some quasi-geodesic ray $\gamma^\pm_\infty\in\mathcal F$ such that 
\[
\lim_{t\to+\infty}\gamma^\pm_\infty(t)=p.
\] 
Since $\mathcal F$ is non-looping, $\gamma_\infty^+$ and $\gamma_\infty^-$ stay at finite distance each other. 

We claim that this implies that there exists $C>0$ such that for all $T\geq 0$,
\begin{equation}\label{Eq:bounded-loop-C-convex}
K_\Omega(\gamma^+(T), \gamma^-(T))\leq C,
\end{equation}
reaching a contradiction to Lemma~\ref{Lem:infinite-dist-ray}. 

In order to prove \eqref{Eq:bounded-loop-C-convex}, 
for every $k\geq t_0$,  consider the quasi-geodesic rectangle given by
\[
[\gamma^+(k), \gamma^+_k(s_k)]_\Omega\cup \gamma^{+}([0,k]) \cup \gamma^{+}_k([0,s_k^+])\cup [\gamma(0), \gamma^{+}_k(0)]_\Omega.
\]
Fix $T\geq t_0$. Hence, by Lemma~\ref{Lem:thick-rect},  there exists some $R>0$ such that for every $k>T$ there exists $z_k\in [\gamma^+(k), \gamma^+_k(s_k)]_\Omega\cup  \gamma^{+}_k([0,s_k^+])\cup [\gamma(0), \gamma^{+}_k(0)]_\Omega$ such that
\begin{equation}\label{Eq:stay-close-gamma+}
K_\Omega(z_k, \gamma^+(T))\leq R.
\end{equation}
Since by hypothesis $\{\gamma^+_k(0)\}$ is relatively compact in $\Omega$---and so is $\{ [\gamma(0), \gamma^{+}_k(0)]_\Omega\}$---and $\gamma^+(t)\to p\in\partial\Omega$ as $t\to +\infty$, it follows that, if $t_0$ (and hence $T$) is large enough, $z_k\not\in [\gamma(0), \gamma^{+}_k(0)]_\Omega$. 

Moreover,  $\{[\gamma^+(k), \gamma^+_k(s_k)]_\Omega\}$ is compactly divergent. Indeed, if this is not the case, there is $y_k\in [\gamma^+(k), \gamma^+_k(s_k)]_\Omega$ such that $\{y_k\}$ is relatively compact, thus
\[
K_\Omega(\gamma^+(k), \gamma^+_k(s_k))=K_\Omega(\gamma^+(k), y_k)+K_\Omega(y_k, \gamma^+_k(s_k)),
\]
and the latter quantity tends to $+\infty$ as $k\to+\infty$ because $K_\Omega$ is a complete distance, against \eqref{Eq:stay-close+-}.

Hence, if $k$ is sufficiently large, $z_k\in \gamma^{+}_k([0,s_k^+])$. By \eqref{Eq:stay-close-gamma+}, $\{z_k\}$ is relatively compact in $\Omega$. 

Since $\gamma_\infty^-$ and $\gamma_\infty^+$ stay at finite distance each other and $\{\gamma_k^\pm\}$ converges uniformly to $\gamma_\infty^\pm$, it follows that there exist $D>0$ and $w_k\in \gamma_k^-$ for $k$ sufficiently large, so that 
\[
K_\Omega(z_k, w_k)\leq D.
\]
In particular, this implies that also $\{w_k\}$ is relatively compact in $\Omega$.

Using again Lemma~\ref{Lem:thick-rect} and arguing as before with the quasi-geodesic rectangle 
\[
[\gamma^-(k), \gamma^-_k(s_k)]_\Omega\cup \gamma^{-}([0,k]) \cup \gamma^{-}_k([0,s_k^-])\cup [\gamma(0), \gamma^{-}_k(0)]_\Omega,
\]
we can find  $S\geq 0$ and $E>0$ such that for all $k$ sufficiently large
\[
K_\Omega(\gamma^-(S), w_k)\leq E.
\]
Then, by the triangle inequality, $K_\Omega(\gamma^+(T), \gamma^-(S))\leq C:=R+D+E$. Therefore,
\begin{equation*}
\begin{split}
K_\Omega(\gamma^+(T), \gamma^-(T))&\leq K_\Omega(\gamma^+(T), \gamma^-(S))+K_\Omega(\gamma^-(S), \gamma^-(T))\\&=K_\Omega(\gamma^+(T), \gamma^-(S))+|T-S|\\&=K_\Omega(\gamma^+(T), \gamma^-(S))+|K_\Omega(\gamma^+(T), \gamma(0))-K_\Omega(\gamma^-(S), \gamma(0))|\\
&\leq 2 K_\Omega(\gamma^+(T), \gamma^-(S))\leq C.
\end{split}
\end{equation*}
Thus, \eqref{Eq:bounded-loop-C-convex} follows, and we are done.
\end{proof}

\end{document}